\newtheorem{theorem}{Theorem}[section]
\newtheorem{proposition}[theorem]{Proposition}
\newtheorem{lemma}[theorem]{Lemma}
\newtheorem{remark}[theorem]{Remark}
\renewcommand{\hom}{\textrm{Hom}}
\newcommand{\fix}{\textrm{Fix}}
\begin{document}
\baselineskip=15.5pt
\title[Cohomology of generalized Dold spaces]{Cohomology of generalized Dold spaces} 
\author[M. Mandal]{Manas Mandal} 

\address{The Institute of Mathematical Sciences, (HBNI), Chennai 600113.} 

\email{manasm@imsc.res.in}

\author[P. Sankaran]{Parameswaran Sankaran
}
\address{Chennai Mathematical Institute, Kelambakkam 603103.}
\email{sankaran@cmi.ac.in}
\subjclass[2010]{57R25}
\keywords{Generalized Dold manifolds,  complex Grassmann manifolds, cohomology algebra, complex $K$-theory}
\thispagestyle{empty}
\date{}

\begin{abstract}

Let $(X,J) $ be an almost complex manifold with a (smooth) involution $\sigma:X\to X$ such that $\fix(\sigma)\ne \emptyset$. Assume that 
$\sigma$ is a complex conjugation, i.e, the differential of $\sigma$ anti-commutes with $J$.  
The space $P(m,X):=\mathbb{S}^m\times X/\!\sim$ where $(v,x)\sim (-v,\sigma(x))$ was referred to as 
a generalized Dold manifold.
The above definition admits an obvoius generalization to a much wider class of spaces where $X, S$ are arbitrary 
topological spaces. The resulting space $P(S,X)$ will be called a 
generalized Dold space.  When $S$ and $X$ are CW complexes satisfying certain natural requirements, we obtain a CW-structure on $P(S,X)$.  Under certain further hypotheses, we determine the mod $2$ cohomology groups of $P(S,X)$.  
We determine the $\mathbb Z_2$-cohomology algebra when $X$ is (i) a torus manifold whose torus orbit 
space is a homology polytope,  (ii) a complex flag manifold.  One of the main tools is the Stiefel-Whitney 
class formula for vector bundles over $P(S,X)$ associated to  $\sigma$-conjugate complex bundles over $X$ when 
the $S$ is a paracompact Hausdorff topological space, extending the validity of the formula,  obtained earlier by Nath and Sankaran,
in the case of 
generalized Dold {\it manifolds}.   
\end{abstract}

\maketitle
\begin{center}
\end{center}

\section{Introduction} \label{intro}

The classical Dold manifold $P(m,n)$ is defined as the orbit space of the $\mathbb Z/2\mathbb Z$-action on $\mathbb S^m\times \mathbb CP^n$ generated by the involution $(v,[z])\mapsto (-v,[\bar{z}]), v\in\mathbb S^m,[z]\in\mathbb CP^n$.
Here $[\bar z]$ denotes $[\bar z_0:\cdots :\bar z_n]$ when $[z]=[z_0:\cdots:z_n]\in \mathbb CP^n.$                
 See \cite{dold}. 

Let $\sigma:X\to X$ be a complex conjugation on an almost complex manifold $(X,J)$, that is, $\sigma$ is an involution with non-empty fixed point set such that, for any $x\in X,$ the differential 
$T_x\sigma:T_xX\to T_{\sigma(x)}X$ satisfies the equation $J_{\sigma(x)}\circ T_x\sigma=-T_{x}\sigma \circ J_x$. See \cite[\S 24]{cf}.
The generalized Dold manifold $P(m,X)$ was introduced in \cite{ns} as the quotient of $\mathbb S^m\times X$ 
under the identification $(v,x)\sim (-v,\sigma(x))$.   The main focus in \cite{ns} was the study of manifold-properties 
of $P(m,X)$ such as the description of its tangent bundle, a formula for its total Stiefel-Whitney class, the 
(stable) parallelizability and related properties, and its cobordism class.  

Here, our aim is to compute the mod $2$-cohomology algebra of the generalized Dold manifolds.  While 
studying the homotopical/homological properites, it is natural to do away with stringent requirements 
such as $X$ to be an almost complex manifold.  
Also we replace $(\mathbb S^m,\textrm{antipode})$, by a pair $(S,\alpha)$ where 
$S$ is, say, a paracompact Hausdorff topological space and $\alpha:S\to S$ a fixed point free involution.  Likewise, $X$ is any Hausdorff topological space 
with an involution $\sigma:X\to X$ having a non-empty fixed point set.  Then $P(S,\alpha,X,\sigma)$  (or more briefly $P(S,X)$) is the space $S\times_{\mathbb Z_2} X=S\times X/\!\sim$ where $(v,x)\sim(\alpha(v),\sigma(x))$ which is the quotient of $S\times X$
by the free $\mathbb Z_2$-action generated by $\alpha\times \sigma$.  
When $S,X$ are CW complexes satisfying certain additional hypotheses, we show that, as $\mathbb Z_2$-vector spaces,  
$H^*(P(S,X);\mathbb Z_2)\cong H^*(Y;\mathbb Z_2)\otimes H^*(X;\mathbb Z_2)$ where $Y=S/\mathbb Z_2$.  
We also obtain the same result when $H^*(X;\mathbb Z_2)$ is generated by the Stiefel-Whitney classes 
of finitely many $\sigma$-conjugate (complex) vector bundles.  
See Proposition \ref{leray-hirsch}.  The notion of a $\sigma$-conjugate vector bundle 
will be defined in \S2.

We obtain a formula for Stiefel-Whitney classes of the real vector bundle over $P(S,X)$ associated to a $\sigma$-conjugate vector bundle over $X$  when $H^1(X;\mathbb Z_2)=0$.    This formula is applied to obtain the {\it ring} structure of 
$H^*(P(S,X);\mathbb Z_2)$ when $X$ is a torus manifold whose torus quotient is a homology polytope, or is a complex flag manifold. 
See Theorem \ref{torus} and Theorem \ref{gen-flagmanifolds}.  Note that the class of such torus manifolds contains the class of all quasi-toric manifolds as well as that of all smooth  complete toric manifolds.  Finally, as an application, we compute the equivariant cohomology ring 
$H^*_{\mathbb Z_2}(X;\mathbb Z_2)$ (see Theorem \ref{equivariant-cohomology}). 


Recently, Sarkar and Zvengrowski \cite{sarkar-zvengrowski} have constructed smooth manifolds which are a simultaneous generalization of projective product spaces 
due to Davis \cite{D} and of Dold manifolds and call them {\it generalized projective product spaces}.  These are $\mathbb Z_2$-quotients $P(M,N)$ 
of products $M\times N$ by the diagonal action where $M,N$ are manifolds admitting $\mathbb Z_2$-actions where the action on $M$ is assumed to be free.    When the fixed point set for $\mathbb Z_2$-action on $N$ is non-empty, they are also 
generalized Dold spaces in our sense.   
Sarkar and Zvengrowski also obtain results on $\mathbb Z_2$-cohomology in many special cases, including when $M$ is a product of spheres and $N$ is a quasi-toric manifold, besides results on manifold-properties of generalized product spaces.

\section{Generalized Dold spaces} 
Let $\alpha:S\to S$ be a fixed point free involution and let $\sigma :X\to X$ be involution with non-empty fixed point set.  We assume that $S,X$ are connected, locally path connected, paracompact, Hausdorff topological spaces.  We denote by 
$Y$ the quotient space $S/\!\!\sim$ where $v\sim \alpha(v), v\in S$.  
The space $P(S, \alpha,X,\sigma)$ (or more briefly $P(S,X)$ when there is no risk of confusion) is defined to be the quotient space 
$S\times X/\!\!\sim$ where $(v,x)\sim(\alpha(v),\sigma(x))$  and is called the {\it generalized Dold space}.   
It is a (smooth) manifold if $S,X$ (and $\alpha,\sigma)$ are smooth.  The quotient maps $q:S\to Y$ and $p:S\times X\to P(S,X)$ 
are covering projections with deck transformation groups isomorphic to $\mathbb Z_2$ generated by $\alpha$ 
and $\alpha\times \sigma$ respectively.   We denote by  $[v]\in Y$ the element $q(v)=\{v,\alpha(v)\}$ and similarly 
$[v,x]=p(v,x)=\{(v,x),(\alpha(v), \sigma(x))\}\in P(S,X).$  The first projection $S\times X\to Y$ induces a 
map $\pi:P(S,X)\to Y$ which is the projection of a locally trivial bundle with fibre space $X$.   Denote by $X^\sigma\subset X$ the fixed points of $\sigma$.   For each $x\in X^\sigma:=\textrm{Fix}(\sigma)$, we have a cross-section 
$s_x=s: Y\to P(S,X)$ where $s([v])=[v,x]$.  In fact, we have a well-defined 
map $i:Y\times X^\sigma\to P(S,X)$ defined as $i([v],x)=[v,x]$.  We observe that $i$ is an embedding.    

The above construction is functorial in $(S,\alpha)$ and in $(X,\sigma)$ for  maps $(S',\alpha')\to (S,\alpha)$ and $(X',\sigma')\to (X,\sigma)$ that are $\mathbb Z_2$-equivariant maps.

\subsection{$\sigma$-conjugate vector bundles}

Let $\sigma:X\to X$ be an involution on a path connected paracompact Hausdorff topological space.  
We assume that $X^{\sigma}$  is a non-empty proper subset of $X$.  
Let $\omega$ be a complex vector bundle over $X$.   A $\sigma$-{\it conjugation} on $\omega$ is an involutive 
bundle map $\hat\sigma: E(\omega)\to E(\omega)$ on the total space of $\omega$ that covers 
$\sigma$ and is conjugate  complex 
linear on the fibres of the projection $\pi_\omega: E(\omega)\to X$.   If such a bundle involution exists, we call $(\omega,\hat\sigma) $ (or more briefly $\omega$) a $\sigma$-conjugate bundle.

Let $(\omega,\hat\sigma)$ be a $\sigma$-conjugate vector bundle over $X$ with bundle projection $E(\omega)\to X$.  The zero-cross section $X\to E(\omega)$ is $\sigma$-equivariant.  In particular, $\fix(\hat\sigma)$ 
is non-empty.  
We obtain a {\it real} vector bundle 
$P(S, \omega)$,  more briefly denoted $\hat\omega$ when there is no danger of confusion, over $P(S,X)$ with projection $P(S,E(\omega),\hat{\sigma})\to P(S,X)$ defined as $[v,e]\mapsto [v,\pi_\omega(e)]$.  

Note that $\hat{\sigma}$ is also a $\sigma$-conjugation on the conjugate complex vector bundle $\bar\omega$ and we have an isomorphism of real vector bundles $\hat \omega\cong \hat{\bar \omega}$.

The above construction of  $\hat \omega$ over $P(S,X)$, as prolongation of a $\sigma$-conjugate complex vector bundle over $X$, can be extended to real vector bundles as follows.  Let 
$\eta$ be any {\it real} vector bundle over $X$ with a bundle involution $\hat\sigma$ that covers $\sigma$.
We denote by $\hat \eta$ the vector bundle over $P(S,X)$ with total space $P(S,E(\eta))=P(S,E(\eta),\hat\sigma)$ and 
bundle projection $\pi_{\hat\eta}:P(S,E(\eta))\to P(S,X)$ defined as $[v,e]\to [v,\pi_\eta(e)]~\forall v\in S, e\in E(\eta)$.  

\subsection{Functoriality}
Suppose that $\alpha_1: S_1\to S_1$ is a fixed point free involution and that $f: S_1\to S$ is $\mathbb Z_2$-equivariant, i.e., $f(\alpha_1(v_1))=\alpha f(v_1),v_1\in S$.   By the functoriality, we obtain a map $F: P(S_1, X)\to P(S,X)$. 
Explicitly $F$ is induced by the $\mathbb Z_2$-equivariant map 
$f\times id: S_1\times X\to S\times X$.  Also, we have a morphism of real vector bundles 
$P(S_1,\omega)\to P(S,\omega)$ where the map between the total spaces $\hat F: P(S_1,E(\omega), \hat\sigma)\to P(S,E(\omega),\hat\sigma)$  is again got by functoriality.  Explicitly, $\hat F([v_1,e])=[f(v_1),e]~\forall v_1\in S_1, e\in E(\omega)$.  We have the following commuting diagram:
\[\begin{array}{ccc}
P(S_1,E(\omega))& \stackrel{\hat F}{\to} & P(S, E(\omega))\\
\downarrow && \downarrow\\
P(S_1,X) &\stackrel{F}{\to}& P(S,X)\\
\end{array}
\]
 It follows that \[F^*(P(S,\omega))=P(S_1, \omega).\eqno(1) \]  

We have the following isomorphism of real vector bundle for any $\sigma$-conjugate complex vector bundle $(\omega,\hat \sigma)$ over $X$: 
\[ P(S,\omega)\cong \xi_\alpha \otimes P(S,\omega).\eqno(2)\]

An explicit bundle isomorphism can be obtained as follows:  Write $\hat \omega:= P(S,\omega)$.  The total space $E(\xi_\alpha \otimes \hat \omega)$ of 
$\xi_\alpha\otimes \hat\omega$ is a subspace of the quotient of $S\times X\times E(\epsilon_\mathbb R\otimes \omega)$ under the identification 
$(v, x, t\otimes  w)\sim (\alpha (v),\sigma(x),-t\otimes \hat\sigma(w))$.  
We write the 
equivalence class of $(v,x,t\otimes w)$ as $[v,x,t\otimes w]$. 
The space $E(\xi_\alpha\otimes \omega)$ 
consists of triples $[v,x,t\otimes w]$ where $\pi_{\epsilon_\mathbb R\otimes \omega}(t\otimes w)=x\in X.$
   A bundle map $f:E(\xi_\alpha\otimes \hat\omega)\to E(P(S,\omega))$ that covers the identity is obtained as $[v,x,t\otimes w]
\mapsto [v,x,\sqrt{-1} tw]$.   The routine verification is left to the reader.  (Cf. \cite[Proposition 1.4(iii) ]{ucci},  \cite[Lemma 2.7]{ns}.)

\subsection{The Stiefel-Whitney classes of $\hat \omega$} 
 Assuming that $H^1(X;\mathbb Z_2)=0$,  a formula for Stiefel-Whitney classes of $\hat \omega$ was 
obtained in \cite{ns} in the case of generalized Dold manifolds $P(\mathbb S^m,X,\sigma)$.  The proof used functorial properties 
of the Dold construction and the splitting principle for complex vector bundles over $X$.  These properties 
hold for any space $X$ as the manifold properties of $X$ play no role in the proof of the Stiefel-Whitney class 
formula.  Specifically, the following formula holds for $\sigma$-conjugate vetor bundles over $P(\mathbb S^n,X)$
for any CW complex $X$ with $H^1(X;\mathbb Z_2)=0$.  

There exist suitable cohomology classes $\tilde c_j(\omega)\in H^{2j}(P(\mathbb S^m,X);\mathbb Z_2)$ 
which restricts to the mod $2$ Chern class $c_j(\omega)=w_{2j}(\omega)\in H^{2j}(X)$ along the fibres of 
$P(\mathbb S^m,X)\to \mathbb RP^m$.  One has the following formula 
for the $i$th Stiefel-Whitney class of $ P(\mathbb S^m,\omega)=\hat \omega$ 
in terms of the $\tilde c_j(\omega)$:
 (cf. \cite[Prop. 2.11]{ns}):
\[w_{i}(\hat \omega)=\sum_{0\le j\le r} {r-j\choose i-2j} x^{i-2j} \tilde c_{ j}(\omega)\eqno(3)\]
where $x:=w_1(\xi_\alpha)\in H^1(P(\mathbb S^m,X);\mathbb Z_2)\cong  H^1(\mathbb RP^m;\mathbb Z_2)\cong \mathbb Z_2$ is non-zero and 
$r=\textrm{rank}_\mathbb C\omega$.  (It is understood that the binomial coefficient ${a\choose b}=0$ if 
$b>a$ or if $a<0$ or $b<0$.) See \cite{ns}. 

We have 
$\tilde c_1(\omega)=w_2(\hat \omega)+{r\choose 2}x^2$ and $w_1(\hat \omega)=rx$.  It can be seen 
easily, using (2) and induction, that $\tilde{c}_j(\omega)$ is expressible as a polynomial $\tilde c_j(\omega)=Q_j(x^2,w_2(\hat\omega),w_4(\hat \omega),\ldots, w_{2j}(\hat\omega))$ for $1\le j\le r$.  Therefore,
for $0\le j<r$,  $w_{2j+1}(\hat\omega)$ can be expressed as a polynomial in $x$ and even Stiefel-Whitney 
classes $w_{2i}(\hat\omega), 0\le i\le j$: 
\[w_{2j+1}(\hat{\omega})=xP_j(x^2,w_2(\hat \omega),w_4(\hat \omega) , \ldots, w_{2j}(\hat\omega))\eqno(4)\] for a suitable polynomial $P_j=P_j(x^2,w_2, w_4,\ldots, w_{2j})$ of total degree $2j$ where $\deg(w_i)=i$.  As noted already, $P_0=r\in \mathbb Z_2$.  
Equations (3) and (4) are still valid when $X$ is any connected CW complex as long as $H^1(X;\mathbb Z_2)=0$.

We need to extend Equation (4)  to the more general 
context of the bundle $\hat \omega:= P(S,\omega)$ over $P(S,X,\sigma)$ under the assumption that 
 $Y$ (and hence $S$) and  $X$ are CW complexes (although one can relax these restrictions 
even further).

We proceed as follows:  First, extend the validity of (3) to the  case when $S=\mathbb S^\infty$ with antipodal involution $-id$ so that $Y=\mathbb RP^\infty$.  Then, any double cover $S\to Y$ can be classified by a map $\bar f: Y\to \mathbb RP^\infty$ 
and so we have an $\mathbb Z_2$-equivariant map $f:(S,\alpha)\to (\mathbb S^\infty,-id)$.  Hence we obtain 
formula (3) with $x=w_1(\xi_\alpha)$ for the bundle $P(S,\omega)$, using the naturality of Stiefel-Whitney classes and the isomorphism (1).  So, it remains  only to consider the case $(S,\alpha)=(\mathbb S^\infty,-id)$.  

Note that $P(\mathbb S^\infty,X)=\cup_{m\ge 1} P(\mathbb S^m,X)$.  Moreover, the inclusion 
$ P(\mathbb S^m,X)\hookrightarrow P(\mathbb S^\infty ,X)$ is an $(m-1)$-equivalence (as can be seen using the homotopy exact sequence) .  Since the spaces involved are CW complexes $i$ induces an isomorphism in cohomology 
up to dimension $m-1$.  Given any $\sigma$-conjugate complex vector bundle $\omega$ over $X$ of rank $r$  we 
merely choose $m>2r$.  It follows that the formula (3) holds for $P(\mathbb S^\infty,\omega)$.

 Using (4), or directly, we see that $P(S, \omega)$ is orientable if and only if $rank_\mathbb C(\omega)$ is even.

\section{Cell structure and cohomology}  
Let $S, X$ to be locally finite CW complexes.   Under appropriate hypotheses on the cell-structures on $S$ and $X$, we obtain a nice CW structure on $P(S,X)$ which has the property that the mod $2$ cellular boundary map vanishes.  We shall see that there are many 
classes of smooth manifolds $S,X$ where there are such CW structures.  

Denote by $(C_*(A),\partial^A)$ the cellular chain complex of a 
CW complex $A$ over $\mathbb Z_2$.    Every cell $e$ has a unique orientation mod $2$ and so defines a 
basis element, again denoted by $e$, of $C_q(A;\mathbb Z_2)=H_q(A^{(q)},A^{(q-1)};\mathbb Z_2)$. 
When $A$ is clear from the context we write $\partial $ instead of 
$\partial ^A$. 
In fact, we shall denote by the same symbol $\partial$ for the differential in any 
chain complexes when there is no danger of confusion which one is meant.

Let $X$ be a connected CW complex.   
Assume that $\sigma: X\to X$ is an involution which stabilizes {\it each} cell of $X$.   In particular 
the zero cells are contained in $\fix(\sigma)$. 
Since $\sigma$ stabilizes each cell of $X$,  we have $\sigma_*(e)=\pm e=e$ for each cell $e$ of $X$ and so 
$\sigma_*:C_*(X;\mathbb Z_2)\to C_*(X;\mathbb Z_2)$ is the identity map.

Let $S$ be a $\mathbb Z_2$-equivariant CW complex where $\alpha: S\to S$ is the generator of the $\mathbb Z_2$-action on $S$. Since $\alpha$ is fixed point free involution, 
no open cell is mapped to itself under $\alpha$. The CW-structure on $(S,\alpha)$ yields a CW structure on $Y$ consisting of one cell $q(e)=q(\alpha(e))$ for each pair of cells $e,\alpha(e)$ of $S$. On the 
other hand, any CW structure on $Y$ lifts to an 
equivariant CW-structure on $S$.  

Also we obtain the product CW structure on $S\times X$ which is $\mathbb Z_2$-equivariant where the 
$\mathbb Z_2$-action 
is generated by $\tau:=\alpha\times\sigma$.   It consists of cells which are products $e\times d$ where $e,d$ are 
cells of $S$ and $X$ respectively.    The induced CW structure on $P(S,X)$ consists of one 
cell $(e,d)$ for each pair of cells $e\times d,\tau(e\times d)=\alpha (e)\times\sigma( d)=\alpha(e)\times d$.  

Suppose that the cell structure on $X$ is perfect mod $2$, that is,
the differential in the cellular chain complex of $X$ with $\mathbb Z_2$-coefficients vanishes 
in all dimensions.  
  Since $\partial^X=0$ in all dimensions, we have 
$H_*(X;\mathbb Z_2)\cong C_*(X;\mathbb Z_2)$.    
 Our assumptions are trivially valid when $X$ has  
no odd dimensional cells. In this case the cellular boundary map $\partial$ vanishes even with $\mathbb Z$-coefficients.  

 Fix $r\ge 0$.  We have $C_r(S\times X;\mathbb Z_2)
\cong \oplus_{p+q=r} C_p(S;\mathbb Z_2)\otimes C_q(X;\mathbb Z_2)$.   Since $\partial ^X=0$, we have  
$\partial (e\otimes d)=\partial(e) \otimes d.$  Morevover $\tau_*(e\otimes d)=\alpha_*(e)\otimes
\sigma_*(d)=\alpha(e)\otimes d$.  Recall that $p$ denotes the covering projection $S\times X\to P(S,X)$.  Since $p\circ \tau=p$ we have the following commuting diagram 
of chain complexes:
\[
\begin{array}{rcl}
C_r(S\times  X;\mathbb Z_2)& \stackrel{\tau_*}{\longrightarrow} & C_r(S\times X,\mathbb Z_2)\\
p_*\searrow && \swarrow p_*\\
&C_r(P(S,X);\mathbb Z_2)&\\
\end{array}
\]
We denote by $(e,d)$ the cell $p(e\times d)=p(\alpha( e)\times d)$ in $ P(S,X)$.  Let $\dim e=p,\dim d=q$.  Suppose that $\partial ^S(e)=\sum_{j\in J(e)} e_j\in C_{p-1}(S;\mathbb Z_2)$ where $J(e)$  is a suitable 
 subset of the indexing set for cells of $S$.    
Then, $\partial(e\times d)=\sum e_j\otimes d$ and so 
\[\partial((e,d))=\sum_{j\in J(e)} p_*(e_j\otimes d)
=\sum_{j\in J(e)} (e_j,d)\in C_{r-1}(P(S,X);\mathbb Z_2).\eqno(5)\]
    Since $q_*(\alpha(e_j))=q_*(e_j)\in C_{p-1}(Y)$, we have $(e_j,d)=(\alpha(e_j),d)$ for any cell 
$d$ of $X$.  If $\alpha(J(e))=J(e)$, then both $(e_j,d)$ and $(\alpha(e_j),d)$ occur in the sum (5) and 
cancel each other out and we  conclude that $\partial((e,d))=0$.  
 (Note that $e_j, \alpha( e_j)$ are distinct cells of $S$.)   We are ready to prove the following proposition. 
 For the notion of cohomology extension of fibre we refer the reader to \cite[Theorem 9, \S7, Chapter 5]{spanier}.

\begin{proposition}    \label{perfectcells}  Suppose that $(S,\alpha)$ is any $\mathbb Z_2$-equivariant CW complex such that the induced cell structure on $Y$ is perfect mod $2$.  Suppose that $X$ is a CW complex  such that (i) each skeleton $X^{(k)},k\ge 1$ is finite, 
(ii)  the cell-structure  is perfect 
mod $2$, and, (iii) each cell is mapped to itself by $\sigma:X\to X$.
Then the CW structure on $P(S,X)$ 
induced by the product CW structure on $S\times X$ is perfect mod $2$.
In particular, one has an isomorphism 
 $H^*(P(S,X);\mathbb Z_2)\cong H^*(Y;\mathbb Z_2)
\otimes H^*(X;\mathbb Z_2)$ of $H^*(Y;\mathbb Z_2)$-modules.
\end{proposition}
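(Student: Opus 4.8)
The plan is to prove the two assertions in turn: first that the induced CW structure on $P(S,X)$ is perfect mod $2$, and then to upgrade the resulting additive isomorphism to an isomorphism of $H^*(Y;\mathbb{Z}_2)$-modules by means of a cohomology extension of the fibre for $\pi\colon P(S,X)\to Y$. The computation leading to Equation (5) already reduces perfectness of $P(S,X)$ to a single point: for every cell $e$ of $S$ the index set $J(e)$ of $\partial^S e=\sum_{j\in J(e)}e_j$ should be invariant under $\alpha$. Indeed, granting $\alpha(J(e))=J(e)$, the cells $e_j$ and $\alpha(e_j)$ are distinct (as $\alpha$ is fixed point free on cells) yet satisfy $(e_j,d)=(\alpha(e_j),d)$ in $C_*(P(S,X);\mathbb{Z}_2)$ because $\sigma(d)=d$; hence the summands in (5) pair off and cancel mod $2$, giving $\partial((e,d))=0$.

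The key step, then, is to extract $\alpha(J(e))=J(e)$ from the hypothesis that the cell structure on $Y$ is perfect mod $2$. Since the $\mathbb{Z}_2$-action on $S$ is free and cellular, the quotient map $q\colon S\to Y$ induces a chain map $q_*\colon C_*(S;\mathbb{Z}_2)\to C_*(Y;\mathbb{Z}_2)$ identifying $C_*(Y;\mathbb{Z}_2)$ with the $\mathbb{Z}_2$-coinvariants, so that $q_*(c)=q_*(\alpha(c))$ is the corresponding cell of $Y$ and $\partial^Y q_*=q_*\partial^S$. Perfectness of $Y$ gives $q_*(\partial^S e)=\partial^Y(q_*e)=0$, i.e. $\sum_{j\in J(e)}q_*(e_j)=0$ in $C_*(Y;\mathbb{Z}_2)$. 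Reading off the coefficient of a fixed cell $q(c)$ of $Y$, and using that $c\neq\alpha(c)$, this says precisely that $c\in J(e)\iff\alpha(c)\in J(e)$; that is, $\alpha(J(e))=J(e)$. Combined with the previous paragraph this proves that $\partial^{P(S,X)}=0$, so the cell structure on $P(S,X)$ is perfect mod $2$ and $H_*(P(S,X);\mathbb{Z}_2)\cong C_*(P(S,X);\mathbb{Z}_2)$.

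It remains to identify the cohomology as an $H^*(Y;\mathbb{Z}_2)$-module. Because the cells of $P(S,X)$ are indexed by pairs consisting of a cell of $Y$ and a cell of $X$, and because $Y$ and $X$ are themselves perfect mod $2$, perfectness of $P(S,X)$ yields an additive isomorphism $H^*(P(S,X);\mathbb{Z}_2)\cong H^*(Y;\mathbb{Z}_2)\otimes H^*(X;\mathbb{Z}_2)$. To see that this respects the module structure coming from $\pi^*$, I would produce a cohomology extension of the fibre: fixing a $0$-cell $v_0$ of $S$, the fibre inclusion $j\colon X\hookrightarrow P(S,X)$, $x\mapsto[v_0,x]$, is the inclusion of a subcomplex whose cells are the $(v_0,d)$, so on the perfect cochain complexes the restriction $j^*$ carries $(v_0,d)^*$ to $d^*$ and is therefore surjective. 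Choosing $\theta(d^*)=(v_0,d)^*$ gives a linear section of $j^*$. Hypothesis (i) guarantees that $H^*(X;\mathbb{Z}_2)$ is finite dimensional in each degree, and hypothesis (iii), via $\sigma_*=\mathrm{id}$ and hence $\sigma^*=\mathrm{id}$ on $H^*(X;\mathbb{Z}_2)$, makes the monodromy of the bundle $\pi$ act trivially on the cohomology of the fibre; consequently the classes $\theta(d^*)$ restrict to a basis of $H^*(X;\mathbb{Z}_2)$ on every fibre. The cohomology extension theorem (Leray--Hirsch) then shows that $a\otimes b\mapsto\pi^*(a)\cup\theta(b)$ is an isomorphism of $H^*(Y;\mathbb{Z}_2)$-modules, as required.

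I expect the main obstacle to be the implication that perfectness of $Y$ forces $\alpha(J(e))=J(e)$: this is where the global hypothesis on $Y$ is converted into the local cancellation needed in (5), and it is the heart of the statement. The remaining subtlety lies in justifying that the extension $\theta$ restricts to a basis on all fibres and not merely the one over $[v_0]$; this is precisely where the triviality of the monodromy, guaranteed by hypothesis (iii), is used, and it is what licenses the appeal to the cohomology extension theorem.
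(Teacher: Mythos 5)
Your proof is correct and follows essentially the same route as the paper: you derive $\alpha(J(e))=J(e)$ from $\partial^Y(q_*e)=q_*(\partial^S e)=0$ exactly as the paper does (just spelling out the coefficient count), conclude $\partial=0$ on $P(S,X)$ via the pairwise cancellation in (5), and then obtain the $H^*(Y;\mathbb Z_2)$-module structure from a cohomology extension of the fibre, which is precisely the Spanier reference the paper invokes. The only cosmetic difference is at the end: the paper treats finite $X$ first and then handles the general case by passing to the finite skeleta $X^{(k)}$ and using that $H^n(P(S,X);\mathbb Z_2)\to H^n(P(S,X^{(k)});\mathbb Z_2)$ is an isomorphism for $k>n$, whereas you apply a finite-type Leray--Hirsch directly, using hypothesis (i) to guarantee degreewise finite dimensionality; both are standard and valid.
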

\begin{proof} Let $\partial^S(e)=\sum_{j\in J(e)}e_j$.  
Since the induced cell structure on $Y$ is perfect,  we have $0=\partial^Y(q_*(e))=q_*(\partial ^S(e))=q_*\sum_{j\in J(e)}e_j$.  Since $q_*(\partial^S(e))=0$
we see that $J(e) $ 
is stable by $\alpha$.
Hence, from Equation (5), for any cell $d$ of $X$, we have $\partial ((e,d))=0$.  Thus the induced cell-structure 
on $P(S,X)$ is perfect mod $2$. 

Assume that $X$ is a finite CW complex.  
We have  $H_k(P(S,X);\mathbb Z_2)\cong C_k(P(S,X);\mathbb Z_2)
\cong \oplus_{i+j=k}C_i(Y;\mathbb Z_2)\otimes C_j(X;\mathbb Z_2)\cong \oplus _{i+j= k}H_i(Y;\mathbb Z_2)\otimes H_j(X;\mathbb Z_2)$.    Consequently $H_*(P(S,X);\mathbb Z_2)\cong H_*(Y;\mathbb Z_2)\otimes H_*(X;\mathbb Z_2)$.    Also $H^*(P(S,X);\mathbb Z_2)\cong H^*(Y;\mathbb Z_2)\otimes H^*(X;\mathbb Z_2)$ is a free module over $H^*(Y;\mathbb Z_2)$ via $\pi^*:H^*(Y;\mathbb Z_2)\to H^*(P(S,X);\mathbb Z_2)$.

In the general case, the inclusion $X^{(k)} \hookrightarrow X$ induces an inclusion $P(S,X^{(k)})\hookrightarrow 
P(S,X)$ which covers the identity map of $Y$.   The proposition follows from the observation that, for any $n\ge 1$, the inclusion-induced homomoprhism 
$H^n(P(S,X);\mathbb Z_2)\to H^n(P(S,X^{(k)});\mathbb Z_2)$ is an isomorphism for all $k>n$. 
  \end{proof}

We shall obtain another situation where the $X$-bundle $\pi:P(S,X)\to Y$ admits a $\mathbb Z_2$-cohomology extension of fibre.  
As observed already, since $X^\sigma\ne \emptyset$,  we 
have a cross-section $Y\to P(S,X)$ of the $X$-bundle and so it follows that 
$\pi^*:H^*(Y;\mathbb Z_2)\to H^*(P(S,X);\mathbb Z_2)$ is a monomorphism. 
We have the following result. For the notion of cohomology extension of fibre we refer the reader to \cite[Theorem 9, \S7, Chapter 5]{spanier}.

\begin{proposition} \label{leray-hirsch}  
  
Let $(\omega_j,\hat \sigma_j),1\le j\le r$, be 
$\sigma$-conjugate complex vector bundles over $(X,\sigma)$ such that 
the cohomology algebra $H^*(X;\mathbb Z_2)$ is generated by the mod $2$ Chern classes 
$c_q(\omega_j)\in H^{2q}(X;\mathbb Z_2), 1\le q\le \textrm{rank}(\omega_j), 1\le j\le r$.   Assume that $X$ satisfies 
any of the following:  (i) $\dim_{\mathbb Z_2}H^*(X;\mathbb Z_2) <\infty$, (ii) $X$ is a CW complex with finite 
$k$-skeleton for each $k\ge 1$ where each cell is stable by $\sigma$. 
Then we have an isomorphism of $\mathbb Z_2$-vector spaces:
\[ H^r(P(S,X);\mathbb Z_2)\cong \bigoplus_{p+q=r}H^p(Y;\mathbb Z_2)\otimes H^q(X;\mathbb Z_2). \eqno(6) \]
In particular, 
$H^*(P(S,X);\mathbb Z_2)$ is a free $H^*(Y;\mathbb Z_2)$-module with basis $B$ where $B$ is a $\mathbb Z_2$-basis for $H^*(X;\mathbb Z_2)$. 
\end{proposition}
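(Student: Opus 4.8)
The plan is to deduce the isomorphism (6) from the Leray--Hirsch theorem applied to the locally trivial $X$-bundle $\pi\colon P(S,X)\to Y$. We already know from the cross-section $s_x$ that $\pi^*$ is a monomorphism; what the theorem needs in addition is a $\mathbb Z_2$-cohomology extension of the fibre, namely a family of classes in $H^*(P(S,X);\mathbb Z_2)$ whose restriction to each fibre is a $\mathbb Z_2$-basis of $H^*(X;\mathbb Z_2)$. Once such a family is produced, the cited form of the theorem (\cite[Theorem 9, \S7, Chapter 5]{spanier}) yields that $H^*(P(S,X);\mathbb Z_2)$ is $H^*(Y;\mathbb Z_2)$-free on it and gives the degreewise splitting (6).

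First I would compute the restriction of the prolonged bundles to a fibre. Fix $v_0\in S$ and identify $\pi^{-1}([v_0])=\{[v_0,x]:x\in X\}$ with $X$ via $x\mapsto[v_0,x]$; this is a homeomorphism since $\alpha$ is fixed-point free, so $v_0\ne\alpha(v_0)$ and each class over the fibre has a unique representative with first coordinate $v_0$. Under this identification the real bundle $\hat\omega_j=P(S,\omega_j)$ restricts to the underlying real bundle $(\omega_j)_{\mathbb R}$ of the complex bundle $\omega_j$, because pinning the first coordinate to $v_0$ removes the identification defining $P(S,E(\omega_j))$. Hence $w(\hat\omega_j)$ restricts to $w((\omega_j)_{\mathbb R})=\sum_q c_q(\omega_j)$ mod $2$, so that $w_{2q}(\hat\omega_j)\in H^{2q}(P(S,X);\mathbb Z_2)$ restricts to the mod $2$ Chern class $c_q(\omega_j)$ on every fibre. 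This step uses only the prolongation of \S2 and, in particular, avoids the hypothesis $H^1(X;\mathbb Z_2)=0$ under which the classes $\tilde c_j$ of Equation (3) were defined.

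Next I would build the extension itself. Since the $c_q(\omega_j)$ generate $H^*(X;\mathbb Z_2)$ as a $\mathbb Z_2$-algebra, monomials in them span, and so a $\mathbb Z_2$-basis $B=\{b_\lambda\}$ of $H^*(X;\mathbb Z_2)$ can be chosen from among such monomials. Replacing each factor $c_q(\omega_j)$ by $w_{2q}(\hat\omega_j)$ turns $b_\lambda$ into a class $\tilde b_\lambda\in H^*(P(S,X);\mathbb Z_2)$; by naturality and multiplicativity of restriction together with the previous paragraph, $\tilde b_\lambda$ restricts to $b_\lambda$ on each fibre, so $\{\tilde b_\lambda\}$ is the sought cohomology extension. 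It is worth recording that the monodromy local system is already trivial: $\sigma^*c_q(\omega_j)=c_q(\bar\omega_j)=c_q(\omega_j)$ mod $2$ because $\hat\sigma_j$ realizes $\sigma^*\omega_j\cong\bar\omega_j$, and since these classes generate, $\sigma^*=\mathrm{id}$ on $H^*(X;\mathbb Z_2)$.

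Finally I would invoke Leray--Hirsch. Under hypothesis (i), $H^*(X;\mathbb Z_2)$ is finite-dimensional, $B$ is finite, and the cited theorem gives (6) at once. I expect the genuine difficulty to lie in hypothesis (ii), where $X$ may be infinite-dimensional and $B$ infinite, so that the naive finite form of the theorem does not apply directly; here $H^*(X;\mathbb Z_2)$ is nevertheless of finite type, since every skeleton $X^{(k)}$ is finite. The route I would take is the skeletal one already used in Proposition \ref{perfectcells}: the inclusions $P(S,X^{(k)})\hookrightarrow P(S,X)$ cover $\mathrm{id}_Y$ and induce isomorphisms on $H^n$ for all $k>n$, so it suffices to establish (6) in degrees $<k$, the range in which $H^*(X)\to H^*(X^{(k)})$ is an isomorphism and the global classes $\tilde b_\lambda$ restrict to a basis. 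The point requiring care is precisely that $H^*(X^{(k)})$ carries top-degree classes not hit by the restricted Chern classes, so one must apply the finite case only in this safe range of degrees and then pass to the direct limit over $k$; doing so assembles the isomorphisms (6) in every degree and exhibits $H^*(P(S,X);\mathbb Z_2)$ as a free $H^*(Y;\mathbb Z_2)$-module on $B$.
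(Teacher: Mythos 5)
Your proposal is correct and follows essentially the same route as the paper: restrict the prolonged bundles $P(S,\omega_j)$ to a fibre to see that $w_{2q}(\hat\omega_j)$ restricts to $c_q(\omega_j)$ mod $2$, deduce surjectivity of $H^*(P(S,X);\mathbb Z_2)\to H^*(X;\mathbb Z_2)$ and hence a cohomology extension of the fibre, apply Leray--Hirsch in case (i), and reduce case (ii) to finite skeleta exactly as in Proposition \ref{perfectcells}. If anything, your treatment of case (ii) is more careful than the paper's, which invokes part (i) for $X^{(k)}$ without remarking that the generation hypothesis can fail in the top degree of $X^{(k)}$; your restriction to the safe range of degrees $n<k$ followed by passage to the limit is the right way to make that step precise.
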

\begin{proof}  Let $e_0\in S$.  
Set $\hat \omega_j:=P(S,\omega_j), 1\le j\le r$.  Then $\iota^*(\hat \omega_j)\cong \omega_j$ where $\iota$ is the inclusion $\iota: X\to P(S,X)$ is defined as $x\mapsto [e_0,x]$.   
So $w_{2i}(\hat\omega_j)$ restricts to $w_{2i}(\omega_j)=c_i(\omega_j)
\in H^{2i}(X;\mathbb Z_2)$.  Since the $c_i(\omega_j), 1\le i\le \textrm{rank}(\omega_j), 1\le j\le r$, generate 
the cohomology algebra $H^*(X;\mathbb Z_2)$, it follows that 
the homomorphism $H^*(P(S,X);\mathbb Z_2)\to H^*(X;\mathbb Z_2)$ is surjective.  Thus, the $X$-bundle 
$(P(S,X),Y,\pi)$ admits $\mathbb Z_2$-cohomology extension of the fibre.  If $\dim_{\mathbb Z_2} H^*(X;\mathbb Z_2)$ is finite, such as when $X$ is a finite CW complex, by Leray-Hirsch theorem 
we conclude that $H^*(P(S,X);\mathbb Z_2)\cong H^*(Y;\mathbb Z_2)\otimes H^*(X;\mathbb Z_2)$ as $H^*(Y;\mathbb Z_2)$-modules.  This proves (i). 

 Suppose that $X$ is a CW complex such that for any $k\ge 1$ the $k$-skeleton $X^{(k)}$ of $X$ is a finite CW complex. Moreover, $\sigma$ restricts to $X^{(k)}$. It follows from part (i) of the proposition that $H^*(P(S,X^{(k)});\mathbb Z_2)\cong H^*(Y;\mathbb Z_2)\otimes H^*(X^{(k)};\mathbb Z_2)$, which establishes the isomorphism (6).
The last assertion follows readily from (6). \end{proof}

\subsection{Sphere bundles}
Let $X=\mathbb S^n,n\ge 1,$ and let $\sigma:\mathbb S^n\to \mathbb S^n$ be the reflection map that sends $e_{n+1}\to -e_{n+1}$ and pointwise 
fixes $\mathbb S^{n-1}\subset \mathbb R^n=\{e_{n+1}\}^\perp$.  
The cell structure on $\mathbb S^n$ with one $0$-cell  $d_0=\{e_1\}$ and one (closed)  $n$-cell $d_n=\mathbb S^n$ is stable by $\sigma$.  With $(S,\alpha)$ as in Proposition \ref{perfectcells}, 
we have $H^*(P(S,\mathbb S^n);\mathbb Z_2)\cong 
H^*(Y;\mathbb Z_2)\otimes H^*(\mathbb S^n;\mathbb Z_2)$ which is a free $A:=H^*(Y;\mathbb Z_2)$-module of rank $2$.   We shall determine $H^*(P(S,\mathbb S^n);\mathbb Z_2)$ as an $A$-algebra.  The two cases $n\ge 2$ 
and $n=1$ need to be treated separately.

\begin{proposition}    Let $n\ge 2$.  Suppose that $S$ satisfies the hypothesis of Proposition \ref{perfectcells}. 
Let $A=H^*(Y;\mathbb Z_2)$. Then  
$H^{*}(P(S,\mathbb S^n);\mathbb Z_2)\cong A[u]/\langle u^2\rangle$ as an $A$-algebra where $\deg u=n$.
\end{proposition}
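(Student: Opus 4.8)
The plan is to recognise $P(S,\mathbb{S}^n)$ as the total space of a sphere bundle over $Y$ and then to pin down the single unknown product $u^2$. Since the reflection $\sigma$ is the restriction to $\mathbb{S}^n\subset\mathbb{R}^n\oplus\mathbb{R}e_{n+1}$ of the linear involution that is the identity on $\mathbb{R}^n$ and $-1$ on $\mathbb{R}e_{n+1}$, the fibre $\mathbb{S}^n$ is the unit sphere of the $\mathbb{Z}_2$-representation $\mathbb{R}^n_{\mathrm{triv}}\oplus\mathbb{R}_{\mathrm{sign}}$. Forming the associated bundle identifies $P(S,\mathbb{S}^n)$ with the sphere bundle $\mathbb{S}(\eta)$ of $\eta=\epsilon^n\oplus\xi_\alpha$ over $Y$, where $\xi_\alpha$ is the line bundle of the double cover $S\to Y$ and $\epsilon^n$ is trivial. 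Hence $w(\eta)=1+x$ with $x=w_1(\xi_\alpha)$, so $w_i(\eta)=0$ for $2\le i\le n$ and the top class $w_{n+1}(\eta)=0$; in particular $\mathbb{S}(\eta)$ has a section. By the discussion preceding the statement (via Proposition \ref{perfectcells}), $H^*(P(S,\mathbb{S}^n);\mathbb{Z}_2)=A\cdot1\oplus A\cdot u$ is a free $A$-module of rank $2$, with $u\in H^n$ restricting to the generator of $H^n(\mathbb{S}^n)$ and with $A$ acting through $\pi^*$. It remains to compute $u^2=a\cdot1+b\cdot u$ with $a\in A^{2n}$, $b\in A^n$, and to show that after a suitable choice of $u$ both terms vanish.

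The crux is to show $b=0$; this is the step I expect to require the most care, and it is exactly here that the hypothesis $n\ge 2$ first enters. I would argue via Steenrod squares and the Thom isomorphism of $\eta$. Since $\deg u=n$ one has $u^2=\mathrm{Sq}^n u$. Let $U$ be the Thom class of $\eta$ and $\delta$ the connecting homomorphism of the disk/sphere-bundle pair; because $w_{n+1}(\eta)=0$ the generator $u$ may be chosen with $\delta u=U$. As $\delta$ is $A$-linear and commutes with Steenrod operations, on the one hand $\delta(u^2)=\delta(a\cdot1+b\cdot u)=b\cdot\delta u=b\cup U$, while on the other $\delta(\mathrm{Sq}^n u)=\mathrm{Sq}^n U=w_n(\eta)\cup U$. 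The Thom isomorphism then forces $b=w_n(\eta)$, which is $0$ for $n\ge 2$. Thus $u^2=\pi^*a$ for some $a\in H^{2n}(Y;\mathbb{Z}_2)$.

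Finally I would show that $a$ is a square and absorb it. Restrict along the equatorial subbundle $\rho\colon Y\times\mathbb{S}^{n-1}=\mathbb{S}(\epsilon^n)\hookrightarrow\mathbb{S}(\eta)$. By the K\"unneth theorem $\rho^*u=\mathrm{pr}_Y^*\mu+\mathrm{pr}_Y^*\nu\cdot[\mathbb{S}^{n-1}]$ for unique $\mu\in H^n(Y)$ and $\nu\in H^1(Y)$; squaring and using $[\mathbb{S}^{n-1}]^2=0$ (valid precisely because $n\ge 2$) gives $\rho^*(u^2)=\mathrm{pr}_Y^*(\mu^2)$. Comparing this with $\rho^*(\pi^*a)=\mathrm{pr}_Y^*a$ and using the injectivity of $\mathrm{pr}_Y^*$ yields $a=\mu^2$. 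Replacing $u$ by $u'=u+\pi^*\mu$, which is still a module generator restricting to the fibre generator, gives $(u')^2=u^2+\pi^*\mu^2=\pi^*(a+\mu^2)=0$ over $\mathbb{Z}_2$. Therefore $H^*(P(S,\mathbb{S}^n);\mathbb{Z}_2)\cong A[u']/\langle(u')^2\rangle\cong A[u]/\langle u^2\rangle$ as an $A$-algebra, as claimed. (The case $n=1$ is genuinely different, since then $w_n(\eta)=w_1(\eta)=x\ne0$ and $[\mathbb{S}^0]^2\ne0$, which is why it must be treated separately.)
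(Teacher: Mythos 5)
Your proof is correct, but it takes a genuinely different route from the paper's. The paper first specializes to $S=\mathbb S^m$, where $P_{m,n}$ is a closed manifold: it takes $u$ to be the Poincar\'e dual of the section $s(\mathbb RP^m)$ determined by a fixed point of $\sigma$, gets $y^mu\ne 0$ from a single transverse intersection with the fibre, and gets $u^2=0$ from two disjoint sections coming from two distinct fixed points in $\mathbb S^{n-1}$; it then passes to $\mathbb S^\infty$ by an $(m-1)$-equivalence and to general $(S,\alpha)$ by pulling back along a classifying map for the double cover. You instead identify $P(S,\mathbb S^n)$ once and for all as the sphere bundle $\mathbb S(\epsilon^n\oplus\xi_\alpha)$, so the computation happens over an arbitrary base: the vanishing of $w_{n+1}(\eta)$ splits the Gysin sequence and gives the rank-two module structure with $\delta u=U$; the identity $u^2=\mathrm{Sq}^nu$ together with $\mathrm{Sq}^nU=w_n(\eta)\cup U$ and the $A$-linearity of $\delta$ pins the $u$-coefficient of $u^2$ down to $w_n(\eta)=0$ for $n\ge 2$; and the restriction to the equatorial product subbundle shows the remaining term is $\pi^*(\mu^2)$, which is absorbed by replacing $u$ with $u+\pi^*\mu$. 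What your approach buys is uniformity (no reduction to $\mathbb S^m$, no Poincar\'e duality or transversality, hence no use of the manifold structure) and a transparent explanation of the $n=1$ anomaly, since there $w_n(\eta)=w_1(\xi_\alpha)=x\ne 0$ forces exactly the relation $u^2=xu$ (up to the normalization of the constant term) that the paper derives separately via the connected-sum/Thom-space picture. What the paper's approach buys is geometric concreteness: the generator $u$ is visibly the class dual to a section, and the relations $y^mu\ne0$ and $u^2=0$ are read off from intersections of explicit submanifolds. The only points in your write-up that deserve an explicit word are the standard facts you rely on: that the connecting homomorphism of the pair $(D(\eta),\mathbb S(\eta))$ is $H^*(Y;\mathbb Z_2)$-linear and commutes with Steenrod squares, and that $\mathrm{Sq}^nU=w_n(\eta)\cup U$ is Thom's definition of the Stiefel--Whitney classes; all are true and citable, so there is no gap.
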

\begin{proof} We first prove the claim when $S=\mathbb S^m$, then extend it to the case $S=\mathbb S^\infty$.  The general case will be shown to follow from the case $S=\mathbb S^\infty$.  
  
Let $S=\mathbb S^m$, $\alpha=-id$ so that $Y=\mathbb RP^m$.  The  Proposition is obvious when $m<n$ and so we assume that $m\ge n$.   We shall denote $P(\mathbb S^m,\mathbb S^n)$ by $P_{m,n}$.  One has an inclusion 
$P_{k,n}\subset P_{m,n} $ for $0\le k\le m$, where $k=0$ corresponds to the fibre inclusion $\mathbb S^n\hookrightarrow P_{m,n}$.  

 Let $y$ be the non-zero element in $H^1(P_{m,n};\mathbb Z_2)\cong H^1(\mathbb{R}P^m;\mathbb{Z}_2)\cong \mathbb Z_2$. Then $A=H^*(\mathbb{R}P^m;\mathbb{Z}_2)=\mathbb{Z}_2[y]/ \langle y^{m+1} \rangle$.
 Observe that $y$ is Poincar\'e dual to the submanifold $P_{m-1,n}\hookrightarrow P_{m,n}$.  Also, one may 
obtain $P_{k,n}$ as the intersection of $(m-k)$-copies of $P_{m-1,n}$ in general position and so $y^{m-k}$ is 
the Poincar\'e dual of $P_{k,n}$.  In particular the Poincar\'e dual of the fibre $\mathbb S^n\hookrightarrow P_{m,n}$ 
equals $y^m$. 

Any $x\in \mathbb S^{n-1}=\textrm{Fix}(\sigma)$ defines a cross-section $s:\mathbb RP^m\to P_{m,n}$ of the sphere bundle projection $P_{m,n}\to \mathbb RP^m$.  Any two such sections are isotopic since $\mathbb S^{n-1}$ is connected. 
(Here we use the hypothesis that $n\ge 2$.) {\it 
We set $u=u_n$ to be the Poincar\'e dual of the submanifold $s:\mathbb RP^m\to P_{m,n}$.}  Since the intersection 
$s(\mathbb RP^m)\cap \mathbb S^n$ is transverse and is exactly one point, we see that $y^mu\in H^{m+n}(P_{m,n};\mathbb Z_2)\cong \mathbb Z_2$ is non-zero.   Also, taking two distinct fixed points $x,x'\in \mathbb S^{n-1}$, we obtain cross-sections $s,s'$ where 
$s(\mathbb RP^m)\cap s'(\mathbb RP^m)=\emptyset$.  This shows that $u_n^2=0$, completing the proof of the proposition in this 
case.

Next, let $S=\mathbb S^\infty$.   Choose $m>2n$ so that the inclusion $j: P(\mathbb S^m,\mathbb S^n)\hookrightarrow P( \mathbb S^\infty,\mathbb S^n)$ is an $(m-1)$-equivalence.  Let 
$U_n\in H^n(P(\mathbb S^\infty,\mathbb S^n);\mathbb Z_2)\cong H^n(P(\mathbb S^m,\mathbb S^n);\mathbb Z_2)$ 
be the element that corresponds to $u_n$.  Since $u_n^2=0$ and since $j^*$ is an isomorphism in dimension $2n$, 
it follows that $U_n^2=0$.  Thus taking $u=U_n$, the claim holds for $S=\mathbb S^\infty$.  

In the general case of $(S,\alpha)$, we have a classifying map $\bar f:Y\to \mathbb RP^\infty$ which classifies the double covering $S\to Y$. Let $f:S\to \mathbb S^\infty$ be a lift of $\bar f$.  Then we obtain a morphism of $\mathbb S^n$-bundles 
$F:P(S,\mathbb S^n)\to P(\mathbb S^\infty,\mathbb S^n)$ that covers $\bar f$ by functoriality.  Let $\mathbf{u}_n=F^*(U_n)$.  Since $H^*(P(\mathbb S^\infty,\mathbb S^n);\mathbb Z_2)\to H^*(P(S,\mathbb S^n);\mathbb Z_2)$ 
is a ring homomorphism, we have $\mathbf {u}_n^2=0$. Setting $u:=\mathbf{u}_n$, the Claim follows. 
\end{proof}

We turn to the case $n=1$.  Note that when $m=1$, $P_{m,1}$ is the Klein bottle.  It can be seen that, as in the 
special case of the Klein bottle, $P_{m,1}$ is a connected sum $\mathbb RP^{m+1}\#\mathbb RP^{m+1}$.  To see this, we let $J^+, J^-\subset \mathbb S^1\subset \mathbb R^2$ to be the closed arcs with end points $e_2, -e_2$ where 
$e_1\in J^+, -e_1\in J^-$.  Note that $\sigma$ stabilizes $J^+$ and $J^-.$ Then $P(\mathbb S^m,J^+), P(\mathbb S^m,J^-)\subset P_{m,n}$ are twisted $I$-bundles over $\mathbb RP^m$ with common boundary $\mathbb S^m$.  The projections of the $I$-bundle restricted 
to $\mathbb S^m$ is the double cover $\mathbb S^m\to \mathbb RP^m$.    Since $\mathbb RP^{m+1}$ is got by attaching an $m$-cell via the double cover $\mathbb S^m\to \mathbb RP^m$, it follows that $P_{m,1}$ is the connected sum $\mathbb RP^{m+1}\#\mathbb RP^{m+1}$ .   Hence $H^*(P_{m,1})\cong \mathbb Z_2[a,b]/\langle a^{m+2}, b^{m+2}, ab\rangle.$
The projection of the circle bundle $\pi: P_{m,1}\to \mathbb RP^m$ induces $\pi^*:H^*(\mathbb RP^m;\mathbb Z_2)
\to H^*(P_{m,1};\mathbb Z_2)$, defined by $y\mapsto a+b$, where $y$ is the generator of $A=H^*(\mathbb{R}P^m;\mathbb{Z}_2)$. (Note that $y=a+b\in H^1(P_{m,n};\mathbb Z_2)$ is the {\it only} element that satisfies the conditions $y^{m+1}=0, y^m\ne 0$.)    As an $A$-algebra, $H^*(P_{m,1};\mathbb Z_2)$ is isomorphic to $A[a]/\langle a^2-ay\rangle $.  When $m=\infty$, $A$ is a polynomial algebra in $y$ and we have $H^*(P_{\infty,1};\mathbb Z_2)\cong A[a]/\langle a^2-ay\rangle$.  

When $(S,\alpha)$ is as in Proposition \ref{perfectcells}, $P(S,\mathbb  S^1)=Y^+\cup Y^-$ where $Y^\pm=
P(S, J^\pm)$ are total spaces of twisted $I$-bundles $\gamma^\pm $ over $Y$ where $Y^+\cap Y^-=P(S,\mathbb S^0)\cong S$.  
We have a continuous surjection $\eta: P(S,\mathbb S^1)\to P(S,J^+)/S=T(\gamma^+)$ where $T(\gamma^+)$ stands for the Thom space of $\gamma^+$.  
We let $a\in H^1(P(S,\mathbb S^1);\mathbb Z_2)$ be the image of the Thom class $a\in H^1(T(\gamma^+);\mathbb Z_2)\cong \mathbb Z_2 $ under the homomorphism induced by $\eta$.  
Arguing as in the proof of the above proposition,  we obtain the following.  

\begin{proposition}
Let $(S,\alpha)$ be as in Proposition \ref{perfectcells}.  Then $H^*(P(S,\mathbb S^1);\mathbb Z_2)=A[a]\langle 
a^2-ay\rangle$ where $\deg a=1=\deg y$.  \hfill $\Box$
\end{proposition}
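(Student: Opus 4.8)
The plan is to combine the free $A$-module structure furnished by Proposition \ref{perfectcells} with a naturality argument that transports the ring relation from the universal case $(S,\alpha)=(\mathbb S^\infty,-id)$, where it has already been established, to the general case. Throughout write $A=H^*(Y;\mathbb Z_2)$ and $y=w_1(\xi_\alpha)\in H^1(Y;\mathbb Z_2)$, identified with its image $\pi^*(y)$ in $H^*(P(S,\mathbb S^1);\mathbb Z_2)$.

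First I would record the module structure. The cell structure on $\mathbb S^1$ with a single $0$-cell $\{e_1\}$ and a single $1$-cell is $\sigma$-stable and perfect mod $2$, so Proposition \ref{perfectcells} applies and shows that $H^*(P(S,\mathbb S^1);\mathbb Z_2)$ is a free $A$-module of rank two. To see that $\{1,a\}$ is a basis it suffices to check that $a$ restricts to the generator of $H^1(\mathbb S^1;\mathbb Z_2)$ on a fibre $\iota\colon\mathbb S^1\hookrightarrow P(S,\mathbb S^1)$. Restricting $\eta$ to a fibre gives the map $\mathbb S^1\to J^+/\partial J^+\cong\mathbb S^1$ that collapses the complementary arc $J^-$, which is a degree-one map mod $2$; since the Thom class restricts along the fibre to a generator of $H^1(J^+/\partial J^+;\mathbb Z_2)$, we get $\iota^*(a)\ne 0$, and hence $\{1,a\}$ is an $A$-basis.

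Next I would pin down the relation $a^2=ay$ by naturality. Let $\bar f\colon Y\to\mathbb{RP}^\infty$ classify the double cover $S\to Y$, choose an equivariant lift $f\colon(S,\alpha)\to(\mathbb S^\infty,-id)$, and let $F\colon P(S,\mathbb S^1)\to P(\mathbb S^\infty,\mathbb S^1)$ be the induced bundle map covering $\bar f$. Because $\gamma^+=P(S,J^+)$ is the pullback under $\bar f$ of the corresponding twisted $I$-bundle over $\mathbb{RP}^\infty$, the map $F$ carries the decomposition $Y^+\cup Y^-$ and the common boundary $S$ compatibly, hence induces a map of Thom spaces intertwining the collapse maps $\eta$. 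Naturality of the Thom class then gives $F^*(a_\infty)=a$, while $F^*(y_\infty)=y$ since $F$ covers $\bar f$ and $y=\bar f^*(y_\infty)$. Applying the ring homomorphism $F^*$ to the relation $a_\infty^2=a_\infty y_\infty$, already available from $H^*(P(\mathbb S^\infty,\mathbb S^1);\mathbb Z_2)\cong A[a]/\langle a^2-ay\rangle$, yields $a^2=ay$.

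Finally I would assemble these. The quotient $A[a]/\langle a^2-ay\rangle$ is a free $A$-module with basis $\{1,a\}$, so the $A$-algebra map sending $a\mapsto a$ into $H^*(P(S,\mathbb S^1);\mathbb Z_2)$ is well defined (by the relation just proved) and carries a basis to a basis (by the module computation), hence is an isomorphism. The main obstacle is the middle step: one must verify carefully that the collapse maps $\eta$ fit into a commutative square with $F$ and the induced map of Thom spaces, and that $\gamma^+$ is genuinely the pullback of the universal twisted $I$-bundle, so that naturality of the Thom class delivers $F^*(a_\infty)=a$ on the nose; the module-theoretic and assembly steps are then routine.
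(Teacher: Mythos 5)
Your proposal is correct and follows essentially the same route as the paper: the paper also defines $a$ as the pullback of the Thom class of the twisted $I$-bundle $\gamma^+=P(S,J^+)$ under the collapse map $\eta$, gets the free rank-two $A$-module structure from Proposition \ref{perfectcells}, and transfers the relation $a^2=ay$ from the universal case $(\mathbb S^\infty,-id)$ (established there via the connected-sum description $P_{m,1}\cong \mathbb RP^{m+1}\#\mathbb RP^{m+1}$) by functoriality along the classifying map. Your extra verifications — that $a$ restricts to the fibre generator and that the Thom classes correspond under $F$ — are exactly the details the paper leaves implicit in the phrase ``arguing as in the proof of the above proposition,'' and they are sound (indeed, even if one only knew $F^*(a_\infty)=a+cy$, the relation would still follow).
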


\section{$\mathbb Z_2$-cohomology algebra of certain generalized Dold spaces}

In this section we determine the cohomology ring of $P(S, X)$ (i) $X$ is a torus manifold under certain mild 
assumptions, which are satisfied by quasi-toric manifolds, (ii) $X$ is a complex flag manifolds and $S$ is paracompact. 
 Basic facts concerning torus manifolds that are required for our purposes will be recalled in \S 4.1.

We begin with families of compact connected smooth manifolds with involutions $(X,\sigma)$ satisfying the 
hypotheses of Proposition \ref{leray-hirsch}.   The basic examples are: (a) complex Grassmann manifolds, or more generally, 
complex flag manifolds $U(n)/(U(n_1)\times \cdots\times U(n_r))$ where $\sum n_j=n$, (b) smooth complete toric varieties, \cite{fulton} and, as we shall show below, their topological analogues quasi-toric manifolds \cite{dj} and 
torus manifolds (satisfying certain additional hypotheses) \cite{mp}.   
As for examples of $(S,\alpha)$ satisfying the hypothesis of Proposition \ref{perfectcells}, we may 
take $S$ to be oriented (real) Grassmann manifolds $\tilde G_{n,k}=SO(n)/SO(k)\times SO(n-k)$ where $\alpha: \tilde{G}_{n,k}\to \tilde{G}_{n,k}$ is the involution which reverses the orientation on each oriented $k$-plane in $\tilde G_{n,k}$.   Note that when $k=1$, $\alpha$ is the antipodal map of the sphere $\tilde G_{n,1}=\mathbb S^{n-1}$.   The quotient space $\mathbb RG_{n,k}$ is the real Grassmann manifold which admits a perfect cell-decomposition over $\mathbb Z_2$.   See \cite[Chapter 6]{ms}. 

We begin with the following lemma which will be needed in the sequel.  

Let $X$ be an oriented compact smooth manifold on which the torus $T\cong (\mathbb S^1)^k$ acts smoothly and effectively.  
Suppose that $H\subset T$ is a circle subgroup such that $F:=X^{H}=\{x\in X\mid t.x=x~\forall t\in  H\}$ is 
an oriented connected submaifold of codimension $2$.   Then $T$ stabilizes $F$. 
Let $\nu$ be the normal 
bundle of $F\hookrightarrow X$.     We put a Riemannian metric on $X$ that is invariant under $T.$     Then  
$\nu$ is the orthogonal 
complement of the tangent bundle $\tau F$ in $\tau X|_F$.  Moreover,  $\nu$ is a $T$-equivariant 
bundle over $F$.  
Note that $H$ acts on $\nu$ as bundle automorphisms since $F$ is point-wise
fixed by $H$.  
Since $F$ and $X$ are oriented, so is $\nu$ and we may (and do) regard $\nu$ as a complex line bundle. 

\begin{lemma}   \label{linebundle}   With notations as above, 
 the complex line bundle $\nu$ is the restriction to $F$ of  a $T$-equivariant 
complex line bundle $\omega$ over $X$. 
Moreover, one has a $T$-equivariant 
cross-section $s: X\to E(\omega)$ which vanishes precisely on $F$.  We have \\
$c_1(\omega)=[F]\in H^2(X;\mathbb Z)$ where $[F]$ denotes the 
the cohomology class dual to $F\hookrightarrow X$.
\end{lemma}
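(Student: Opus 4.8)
The plan is to construct $\omega$ by clutching the normal bundle near $F$ to a trivial line bundle away from $F$, carrying out every step $T$-equivariantly, and then to read off $c_1(\omega)$ from the zero locus of the resulting section. The whole argument rests on first making the complex structure on $\nu$ genuinely $T$-equivariant. Because the chosen metric on $X$ is $T$-invariant, each $t\in T$ acts on $\tau X|_F$ by a linear isometry that preserves $TF$, hence preserves its orthogonal complement $\nu$; as $T$ is connected it preserves the orientation of $X$ and of $F$, hence of $\nu$. An orientation-preserving linear isometry of an oriented Euclidean $2$-plane is a rotation, and rotations are $\mathbb{C}$-linear for the complex structure induced by the orientation. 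Thus $\nu$ is a $T$-equivariant complex line bundle over $F$.

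Next I would invoke the equivariant tubular neighbourhood theorem (via the exponential map of the invariant metric) to obtain a $T$-invariant open neighbourhood $N$ of $F$ together with a $T$-equivariant diffeomorphism $N\cong E(\nu)$ that is the identity on $F$ and whose normal derivative along the zero section is the identity of $\nu$. Writing $p:E(\nu)\to F$ for the projection, the pulled-back complex line bundle $p^*\nu$ is $T$-equivariant and restricts to $\nu$ over $F$; moreover it carries the tautological section $\tau(v)=v$, which is $T$-equivariant, vanishes exactly along the zero section (i.e. along $F$), and is transverse to it since its normal derivative at a zero is the identity of $\nu$.

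The clutching step is the point that I expect to require the most care. Away from $F$ the section $\tau$ is nowhere zero, so it trivialises $p^*\nu$ over $N\setminus F$; and because $T$ acts $\mathbb{C}$-linearly on the fibres, the trivialising ratio $w\mapsto w/\tau(v)$ is $T$-invariant, so this trivialisation intertwines the $T$-action on $p^*\nu$ with the \emph{trivial} $T$-action on $\mathbb{C}$. I would therefore glue $p^*\nu$ over $N$ to the trivial bundle $\underline{\mathbb{C}}$ (with trivial $T$-action) over the open set $X\setminus F$, along the overlap $N\setminus F$, matching $\tau$ with the constant section $1$. Since $\{N,\,X\setminus F\}$ is an open cover and the gluing isomorphism is defined on the overlap, this produces a global $T$-equivariant complex line bundle $\omega$ with $\omega|_F\cong\nu$, and the local sections $\tau$ and $1$ patch to a global $T$-equivariant section $s$ vanishing precisely on $F$. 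The delicate part is exactly that the gluing datum must be simultaneously $T$-equivariant and section-compatible, and this is what the $\mathbb{C}$-linearity of the $T$-action secures.

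Finally, $c_1(\omega)=[F]$ follows from the standard fact that the transverse zero locus of a section of a complex line bundle is Poincar\'e dual to its first Chern class. Here $s$ vanishes transversally along $F$ with multiplicity one, and the orientation of $\nu$ — chosen so that the orientation of $X$ agrees with that of $F$ followed by that of $\nu$ — is precisely the orientation under which this zero locus represents the dual class $[F]$. Hence $c_1(\omega)=[F]\in H^2(X;\mathbb{Z})$, as claimed. (For the later mod $2$ applications only the reduction $c_1(\omega)\equiv w_2(\omega)\bmod 2$ is needed, so any residual orientation-sign subtlety is immaterial there.)
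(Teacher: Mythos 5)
Your proposal is correct and follows essentially the same route as the paper: pull $\nu$ back over an equivariant tubular neighbourhood, use the tautological section to trivialise it away from the zero section ($T$-equivariantly, thanks to the $\mathbb{C}$-linearity of the fibrewise action coming from the invariant metric), glue to the trivial bundle on the complement, and read off $c_1(\omega)=[F]$ from the transverse zero locus of the resulting section. The only cosmetic differences are that the paper glues the disk bundle to the complement of its interior along the sphere bundle rather than using an open cover, and cites an earlier reference for the non-equivariant existence and the Chern class identity instead of rederiving them.
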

\begin{proof}   
 Without the equivariance 
part, the existence of $\omega$ and $s$ were proved in \cite{s} and the equality 
$c_1(\omega)= [F]$ was deduced.    So we need only construct $\omega$ and $s$ as  $T$-equivariant objects.  
As in the discussion before the statement of the lemma, we put a Riemannian metric on $X$ that is invariant under 
$T$.

Denote by $\omega_0$ the pull-back of $\nu$ to $D(\nu)$ via $\pi$ where $\pi:D(\nu)\to F$ is 
the projection of the unit disk bundle associated to $\nu$. 
Then $\omega_0:=\pi^*(\nu)$ is a $T$-equivariant complex line bundle which admits an equivariant cross-section $s_0:D(\nu)\to E(\omega_0)$ defined as follows:   Recall that $E(\omega_0)$ is the fibre product 
$\{(v, w)\in D(\nu)\times E(\nu)  \mid \pi(v)=\pi(w)\}\subset D(\nu)\times E(\nu)$ and that $T$ acts on 
$E(\omega_0)$ diagonally: $t.(v,w)=(t.v,t.w)$.    We have 
$s_0(v):=(v, v)~\forall v\in D(\nu)$.   Note that 
$s_0$ vanishes precisely when $v=0$, i.e., on the zero-cross section $F\to D(\nu)$.    
Moreover, $s_0(t.v)=(t.v,t.v)=t.(v,v)=t.s_0(v)$ for all $v\in D(\nu), t\in T$, showing that $s_0$ is 
$T$-equivariant.
Let $S(\nu)=\partial D(\nu).$   We have an isomorphism of complex line bundles $\phi: E(\omega_0|_{S(\nu)})\to S(\nu)\times \mathbb C$ defined as $\phi(v, zv) 
=(v,z||v||)~\forall v\in S(\nu), z\in \mathbb C$.     The section $\phi\circ s_0$ corresponds to the constant function 
$1$ (i.e., $\phi\circ s_0(v)=(v,1)~\forall v\in S(\nu)$).   Since the Riemannian metric on $X$ is $T$-invariant, we have $\phi(t.(v,zv))=\phi(t.v, z t.v))=(t.v, z||t.v||)=(t.v,z||v||)
=t. (v,z)=t.\phi(v,zv)~\forall (v,w)\in E(\omega_0), t\in T$.  This shows that, with the trivial $T$-action on 
$\mathbb C$ understood, $\phi$ is $T$-equivariant.  

Let $N\subset X$ be an equivariant tubular neighbourhood of $F\subset X$ which is $T$-equivariantly diffeomorphic to $D(\nu)$. (See \cite[\S2, Chapter VI]{bredon}.)  Identifying $N$ with $D(\nu)$ via such a diffeomorphism, we obtain a $T$-equivariant 
complex line bundle, again denoted $\omega_0,$ on $N$ which restricts to $\nu$ on $F$, and a  $T$-equivariant cross-section $s_0:N\to E(\omega_0)$ which vanishes precisely on $F$.  Moreover, we have an isomorphism $\phi: E(\omega_0|_{\partial N}) \to \partial N\times \mathbb C$.   We glue $E(\omega_0)$ and $(X\setminus \textrm{int}(N))\times \mathbb C$, the total space of the 
trivial line bundle $\varepsilon _\mathbb C$,  along $S(\nu) \times \mathbb C$ using $\phi$ to obtain a $T$-equivariant complex line bundle 
$\omega$ on $X$.  
The section $s_0$ extends to a $T$-equivariant section $s:X\to E(\omega)$ such that $s|_{X\setminus\textrm{int}(N)}$ corresponds to the constant function $x\mapsto 1\in \mathbb C$.   In particular, $s$ vanishes precisely on $F$.   
\end{proof}

\subsection{Torus manifolds}
The of notion of torus manifolds is due to Hattori and Masuda  \cite{hm}, \cite{m} .  We shall use the definition 
as given in  Masuda and Panov \cite{mp}.  In fact, we consider 
a restricted class of torus manifolds (in the sense of Masuda-Panov), namely, those torus manifolds where the torus action is locally standard  and the  orbit space is a homology polytope.   This restricted class itself is a generalization of the notion of quasi-toric manifolds due to Davis and Januskiewicz \cite{dj}, where the orbit space is a simple convex polytope.
We begin by recalling the basic definitions.

A {\it torus manifold} is an even dimensional 
 smooth compact orientable manifold $X$ on which an $n$-dimensional torus $T\cong 
U(1)^n$ acts smoothly and effectively with non-empty fixed point set where $\dim X=2n$.  
One says that the $T$-action on $X$  is {\it locally standard} if $X$ is covered by open sets $\{U\}$ that are 
equivariantly diffeomorphic to an open subset of $\mathbb C^n$ with the standard $T\cong U(1)^n$-action.  This means that, for each $U$, there exists an automorphism $\psi:T\to T$ and an embedding $f:U\to \mathbb C^n$ such that 
$f(t.u)=\psi(t)f(u)~\forall u\in U, t\in T$.   The orbit space $Q:=X/T$ is then an $n$-dimensional  manifold with corners (i.e., is modelled on $\mathbb R_{\ge 0}^n$).   It turns out that set of $T$-fixed points of $X$ is finite and their images 
are the vertices of $Q$.  It is easy to see (using the irreducible characters of the isotropy representation) that 
each $T$-fixed point is a connected component of the intersection of exactly $n$ 
distinct $T$-stable submanifolds each having codimension $2$ in $X$.  There are only finitely many $T$-stable 
codimension $2$ submanifolds in $X$, say $X_i, 1\le i\le m,$ and each of these are fixed by certain circle subgroup $S_i$ of $T$.  
These are the {\it characteristic submanifolds} of $X$. 
Their images, $Q_i:=X_i/T$  in $Q$ are the {\it facets} of $Q$, which have 
codimension $1$ in $Q$.  The characteristic submanifolds of $X$ are all orientable and are again torus manifolds 
under the $T/S_i$-action with orbit space $Q_i$.  

A {\it preface} of $Q$ is a non-empty intersection of a facets of $Q$.  A {\it face} of $Q$ is a connected component of a preface.  We regard 
$Q$ itself as (the improper) face; all the other faces are {\it proper}.   $Q$ is said to be {\it face-acyclic} if all its faces (including $Q$ itself) are acyclic, i.e., have the integral homology of a point.  If $Q$ is face acyclic, then every face contains a vertex of $Q$. 
 If $Q$ is face-acyclic and if every preface is a face, then $Q$ is called a {\it homology polytope}.
For example, 
when $X$ is a quasi-toric manifold, then $Q=X/T$ is a simple convex polytope, which is evidently a homology polytope.   (A (compact) convex polytope of dimension $n$ is {\it simple} if exactly $n$ facets meet at each vertex.)

A characteristic submanifold  $X_i\hookrightarrow X$ determines a circle subgroup $S_i\subset T$; namely,  the subgroup of $T$ that fixes each point of $X_i$.   Choosing 
an isomorphism $f_i: \mathbb S^1\cong S_i$ amounts to choosing a primitive vector $v_i\in \hom(\mathbb S^1, T)\cong \mathbb Z^n$ whose image equals $S_i$. Note that $v_i$ is determined up to sign corresponding to two isomorphisms  $\mathbb S^1\to S_i$ namely $f_i , f_i\circ \iota$ where $\iota(z)=z^{-1}~\forall z\in \mathbb S^1$.  The sign is determined once an orientation on $S_i$ is fixed. 

We shall denote the group of $1$-parameter subgroups $\hom(\mathbb S^1,T)$ by $\mathbf{ N}$ and the group 
of characters $\hom(T,\mathbb S^1)$ by $\mathbf N^\vee\cong \mathbb Z^n$.   One has a natural pariing 
$\langle.,.\rangle: \mathbf N^\vee\times \mathbf N\to \mathbf Z$ defined by $u\circ v(z)=z^{\langle u,v\rangle}$. 

An {\it omni-orientation} of $X$ is a choice of an 
orienation on $X$ and on each characteristic submanifold $X_i, 1\le i\le m.$   The orientations on $X, X_i$ determines a unique orientation
on the normal space to $T_xX_i\subset T_xX$ for any $x\in X_i$ which in tern leads to an orientation on $S_i.$  This 
determines a unique {\it primitive} vector $v_i\in \mathbf{N}$ whose image is $S_i$.

Fix an omni-orientation on $X$ and assume that $Q$ is a homology polytope.
Denote by $\mathcal Q$ the set of all facets of $Q$.
We obtain map $\Lambda: \mathcal Q\to \mathbf{ N}$ defined as $\Lambda(Q_i)=v_i, 1\le i\le m$.  
Local standardness of the $T$ action implies that $ \Lambda(Q_{i_1}),\ldots, \Lambda(Q_{i_n})$ 
is a  $\mathbb Z$-basis of $ \mathbf{N}$ whenever 
$Q_{i_1},\ldots, Q_{i_n}\in \mathcal Q$ meet at a vertex of $Q$.  The map $\Lambda$ is called the {\it characteristic function} 
 of $X$.   The pair $(Q,\Lambda)$ determines $X$ up to equivariant homeomophism assuming the vanishing of $H^2(Q;\mathbb Z)$.  In fact, let $X(Q,\Lambda)$ denote 
the space $T\times Q/\!\!\sim$ where $(t,q)\sim (t',q')$ if and only if $ q=q'$ and $t^{-1}t' $ belongs to the subgroup of $T$ generated by the one parameter subgroups $\Lambda(Q_{i_1}),\ldots, \Lambda(Q_{i_k})$ where $q$ is in the interior of the face $Q_{i_1}\cap\cdots\cap Q_{i_k}$.  
Then $X(Q,\Lambda)$ is a 
a smooth manifold on which $T$ acts on the left with orbit space $Q$.   Further $Q$ is naturally embedded in $X( Q,\Lambda)$ via the map $q\mapsto [1,q]$.   We regard $Q$ as a subspace of $X$.  The quotient map $X(Q,\Lambda)\to Q$
is therefore a retraction.  
It was shown in \cite[Lemma 4.5]{mp}, assuming the vanishing of $H^2(Q;\mathbb Z)$, that $X(Q,\Lambda)$ is equivariantly 
homeomorphic to $X$.  We identify $X$ with $X(Q,\Lambda)$.  

Let $\sigma: X(Q,\Lambda)\to X(Q,\Lambda)$ be the involution $[t,q]\mapsto [t^{-1},q]$.   (It is readily verified that 
this definition is meaningful.)   Then $X^\sigma=\{[t,q]\mid  t^2\in S_q, ~\forall q\in Q\}$, which is the analogue of a {\it small 
cover} in the context of quasi-toric manifolds; see \cite{dj}.   Suppose that $X_i$ is a characteristic submanifold of $X$, fixed 
by a subgroup $S_i\cong \mathbb S^1$ of $T$.  Let $q_i$ be in the interior of $Q_i$ (i.e., $q_i\in Q_i$ but not in any proper face of $Q_i$).  Then $S_i$ is the isotropy subgroup at $q_i$, and $X_i$ is the closure of the $T$-orbit of $Q_i.$ 
Since $\sigma(Q_i)=Q_i$, it follows that $\sigma(X_i)=X_i$.   

Let $\tilde{T}:=T\rtimes \langle \sigma\rangle$ be the semi-direct product where $\sigma.t=t^{-1}. \sigma$ in $\tilde T$.
 Then $\tilde T$ acts on $X$.  We put a Riemannian metric on $X$ which is invariant with respect to $\tilde T$.  
In particular $\sigma$ preserves the Euclidean metric on the normal bundle $\nu_i$ over $X_i$.  Let $N_i$ denote a 
tubular neighbourhood of $X_i$ that is stably by $\tilde T$ and is $\tilde T$-equivariantly diffeomorphic to $D(\nu_i)$, 
the disk bundle of $\nu_i$.  We denote by $\theta_i:N_i\to D(\nu_i)$ such a diffeomorphism. 
We have that $T\sigma$ restricts to a bundle map of 
the unit disk bundle $\pi_i:D(\nu_i)\to X_i$ covering $\sigma|_{X_i}$ and $\sigma(N_i)=N_i$.  As in the proof of Lemma \ref{linebundle}, 
we denote by $\omega_{i,0}$ the bundle $\theta_i^*( \pi_i^*(\nu_i))$ over $N_i$.   
We shall abuse notation and write $\pi_i$ for the projection $N_i\to X_i$.  This yields an involution $\hat\sigma_{i,0}$ on $\omega_{i,0}$ defined as 
$(u, v)\mapsto (\sigma(u), T_{\pi_i( u)}\sigma (v))$ where $u\in N_i, $ and $v\in D(\nu_i)$ is in the fibre of $D(\nu_i)\to X_i$ over $\pi_i( u)$.

\begin{lemma}  \label{tsigma}
(i) The bundle map $T\sigma|_{D(\nu_i)}$ is orientation reversing on $\nu_i$.\\
(ii) The involution $\hat\sigma_{i,0}:E(\omega_{i,0})\to E(\omega_{i,0})$ is a complex conjugation that 
covers $\sigma|_{N_i}$.
\end{lemma}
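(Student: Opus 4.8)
The plan is to prove both parts simultaneously by showing that, on each normal fibre, the differential of $\sigma$ is conjugate-complex-linear. Part (i) is then immediate, since a conjugate-linear automorphism of a complex line reverses the complex orientation and hence the orientation induced on $\nu_i$; and part (ii) reduces to checking the bundle-theoretic formalities in the definition of a $\sigma$-conjugation.

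First I would pin down the isotropy representation of $S_i$ on $\nu_i$. Since $S_i$ fixes $X_i$ pointwise and $X_i$ has codimension $2$, local standardness of the $T$-action forces $S_i$ to act effectively on the fibres of $\nu_i$ by rotations; as $S_i$ preserves the $\tilde T$-invariant metric and is connected, this rotation action is $\bc$-linear for the complex structure that the orientation and the metric induce on $\nu_i$. Thus there is a faithful character $\lambda\colon S_i\to\mathbb S^1$ with $t\cdot v=\lambda(t)\,v$ for all $v\in\nu_{i,x}$, $x\in X_i$, $t\in S_i$.

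The key step is to exploit the defining relation $\sigma t=t^{-1}\sigma$ in $\tilde T$. Fix $x\in X_i$ and $t\in S_i$, and write also $t$ for the diffeomorphism $x\mapsto t\cdot x$ of $X$. Since $t\cdot x=x$, differentiating the identity $\sigma\circ t=t^{-1}\circ\sigma$ at $x$ gives
\[
T_x\sigma\circ (T_x t)=\big(T_{\sigma(x)}t^{-1}\big)\circ T_x\sigma
\]
as maps $\nu_{i,x}\to\nu_{i,\sigma(x)}$ (here $\sigma(x)\in X_i$ because $\sigma$ stabilises $X_i$). Restricting to the normal fibre and inserting $T_x t|_{\nu_{i,x}}=\lambda(t)\,\cdot$ and $T_{\sigma(x)}t^{-1}|_{\nu_{i,\sigma(x)}}=\lambda(t)^{-1}\,\cdot=\overline{\lambda(t)}\,\cdot$, I obtain $T_x\sigma(\lambda v)=\bar\lambda\,T_x\sigma(v)$ for every $v\in\nu_{i,x}$ and every $\lambda\in\mathbb S^1$. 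Taking $\lambda=\sqrt{-1}$ yields $T_x\sigma(\sqrt{-1}\,v)=-\sqrt{-1}\,T_x\sigma(v)$, which, together with $\br$-linearity, is exactly the condition that $T_x\sigma$ be conjugate-linear on $\nu_{i,x}$. This proves (i). For (ii) it then remains to verify the formal requirements: that $\hat\sigma_{i,0}$ covers $\sigma|_{N_i}$ follows from the $\tilde T$-equivariance of $\theta_i$, which makes $\pi_i\colon N_i\to X_i$ intertwine $\sigma|_{N_i}$ with $\sigma|_{X_i}$, so $T_{\pi_i(u)}\sigma$ carries the fibre of $\omega_{i,0}$ over $u$ to the fibre over $\sigma(u)$; fibrewise $\hat\sigma_{i,0}$ acts by $T_{\pi_i(u)}\sigma$, just shown to be conjugate-linear; and $\hat\sigma_{i,0}$ is an involution since composing its defining formula with itself and using $\sigma^2=\mathrm{id}$ gives $T_{\sigma(x)}\sigma\circ T_x\sigma=\mathrm{id}$ with $x=\pi_i(u)$, whence $\hat\sigma_{i,0}^2=\mathrm{id}$. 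Therefore $\hat\sigma_{i,0}$ is a complex conjugation covering $\sigma|_{N_i}$.

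The main obstacle is the differentiation step together with the correct identification of the isotropy representation. One must be careful that $\sigma$ does \emph{not} fix the points of $X_i$, so the relevant fibre map is $\nu_{i,x}\to\nu_{i,\sigma(x)}$ and base points must be tracked through $\sigma$ when applying the chain rule; and one must justify that the orientation–metric complex structure on $\nu_i$ coincides with the structure for which $S_i$ acts by the character $\lambda$, which is precisely where local standardness and the $\tilde T$-invariance of the metric enter.
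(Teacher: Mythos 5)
Your argument is correct, but it is genuinely different from the one in the paper. The paper proves (i) first, by a hands-on geometric argument in the model $X(Q,\Lambda)=T\times Q/\!\sim$: it picks a point $q\in N_i\cap \mathrm{int}(Q)$ lying in the fibre disk $D_{q_i}$ over some $q_i\in Q_i$, observes that $\sigma$ fixes exactly the two points $[1,q]$ and $[t_0,q]$ of the $S_i$-orbit of $q$ (where $t_0\in S_i$ has order two), concludes that $\sigma$ acts on $D_{q_i}$ as a reflection in a diameter and is therefore orientation reversing, and then deduces (ii) from the fact that an isometric, orientation-reversing involution of an oriented Euclidean plane, viewed as a complex line, is a complex conjugation. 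You instead work infinitesimally: you identify the isotropy representation of $S_i$ on $\nu_i$ as a faithful character $\lambda$ using local standardness and the invariant metric, differentiate the relation $\sigma t=t^{-1}\sigma$ in $\tilde T$ at a point of $X_i$ to get $T_x\sigma(\lambda v)=\bar\lambda\,T_x\sigma(v)$ for all $\lambda\in\mathbb{S}^1$, and read off conjugate-linearity (hence orientation reversal) directly. Your route has the advantage of not invoking the explicit combinatorial model of $X(Q,\Lambda)$ or the structure of $Q$ at all — it only uses the semidirect-product relation, local standardness, and connectedness of $X_i$ (needed to make the character $\lambda$ globally constant, a point you assert and should keep explicit, since if the weights at $x$ and $\sigma(x)$ were inverse to each other the same computation would give complex-linearity instead). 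The paper's route is more elementary and makes the reflection picture visible, but is tied to the specific description of $\sigma$ as $[t,q]\mapsto[t^{-1},q]$; yours generalizes verbatim to any torus manifold with an involution satisfying $\sigma t\sigma=t^{-1}$.
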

\begin{proof} (i) It suffices to show that $\sigma$ is orientation 
reversing on the fibre of $N_i\cong D(\nu_i)\to X_i$ over a point $q_i\in Q_i$.  Since any neigbouhood of 
$Q_i$ meets the interior of $Q$, we see that $N_i\cap int(Q)\ne \emptyset$.  Let $q\in N_i\cap int(Q)$.  
Since $\sigma(q)=q$, and since $\sigma$ is fibre preserving on $N_i\to X_i$, it follows that $q$ is in the fibre $D_{q_i}$ over 
a $q_i\in Q_i$.   Let $t_0\in S_i$ be the unique order two element. Then $[t_0, q]\in D_{q_i}$ and $\sigma([t_0,q])=[t_0,q]\ne [1,q]=q$ and, moreover, no other point in the $H_i$-orbit of $q$ is fixed by $\sigma$.  This shows that $\sigma$ is the reflection of the disk $D_{q_i}$ about the `diameter' of the disk $D_{q_i}$ through $q$.  Hence $\sigma$ is orientation 
reversing.

\noindent 
(ii) Fixing orientation on $\nu_i$, we obtain a reduction of the structure group of $\nu_i$ to  $SO(2)\cong U(1)$, 
making $\nu_i$ a complex line bundle.  Since $T\sigma|_{E(\nu_i)}$ preserves the Euclidean metric on $\nu_i$, 
and is an involution, we conclude that it is a complex conjugation covering $\sigma|_{X_i}$.  The same argument 
applied to the pull-back of $\nu_i$ via the projection $\pi_i: N_i\to X_i$ of the disk bundle shows that $(p,v)\mapsto (\sigma(p), T_{\pi_i(p)}\sigma(v))$ is a complex conjugation of $\pi^*_i(\nu_i)=\omega_{i,0}$ covering  $\sigma|_{N_i}:N_i\to N_i$.  
\end{proof}

Taking $F=X_i\subset X$, a characteristic submanifold in Lemma \ref{linebundle}, we obtain a $T\rtimes \langle \sigma\rangle $-equivariant 
complex line bundle $\omega_i$ over $X$ that extends the normal bundle $\nu_{i}$ over $X_i$, and a $T$-equivariant cross-section $s_i:X\to E(\omega_i)$ which vanishes precisely on $X_i$.  
In fact $\omega_i|_{N_i}=\omega_{i,0}$ and $\omega_{i}|_{X\setminus int(N_i)}=\varepsilon_\mathbb C$, the trivial 
complex line bundle.  

We claim that the complex conjugation $\hat{\sigma}_{i,0}$ on $\omega_{i,0}$  extends to a complex 
conjugation on $\omega_i$ that covers $\sigma:X\to X$.
Explicitly, we let 
$\hat\sigma_i$ to be the standard complex conjugation on the trivial bundle over $(X\setminus int(N_i))$.  It 
remains to show that, under the identification of $\omega_{i,0}|_{\partial N_i}$ with the trivial bundle  
via the cross-section $s_0:N_i\to E(\omega_i)$, the restriction 
$\hat{\sigma}_{i,0}|_{\partial N_i\times \mathbb C}$ is the 
 same as the standard complex conjugation on $E(\omega_{i,0}|_{\partial N_i})$.   This is clear since 
$s_0|_{\partial N_i}$ corresponds to the constant function $x\mapsto 1$ as noted in the proof of Lemma 
\ref{linebundle}.  
 Thus we have proved the following.

\begin{lemma}  \label{sigmaconjugatelinebundles} With the above notations,  
one has a $\sigma$-conjugation $\hat{\sigma}_i:E(\omega_i)\to E(\omega_i)$ for each $1\le i\le m$. 
Moreover, the Chern class $c_1(\omega_i)\in H^2(X;\mathbb Z)$ equals the cohomology class Poincar\'e 
dual to $X_i\hookrightarrow X$, i,e, $c_1(\omega_i)=[X_i]\in H^2(X;\mathbb Z)$.  
 \hfill $\Box$
\end{lemma}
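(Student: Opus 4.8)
The plan is to assemble the $\sigma$-conjugation $\hat\sigma_i$ on $\omega_i$ out of two pieces---the conjugation $\hat\sigma_{i,0}$ already produced on $\omega_{i,0}=\omega_i|_{N_i}$ in Lemma \ref{tsigma}(ii), and the standard coordinatewise conjugation on the trivial bundle $\varepsilon_\mathbb{C}=\omega_i|_{X\setminus\mathrm{int}(N_i)}$---and then to read off the Chern class directly from Lemma \ref{linebundle}. Recall from the construction preceding Lemma \ref{linebundle} that $\omega_i$ was glued from $E(\omega_{i,0})$ and $(X\setminus\mathrm{int}(N_i))\times\mathbb{C}$ along $\partial N_i$ by means of the trivialization $\phi$, and that both pieces $N_i$ and $X\setminus\mathrm{int}(N_i)$ are $\sigma$-stable since $\sigma(N_i)=N_i$. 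Thus it suffices to define $\hat\sigma_i$ separately on each piece and to check that the two definitions agree on the overlap $\partial N_i$.

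First I would set $\hat\sigma_i=\hat\sigma_{i,0}$ over $N_i$ and $\hat\sigma_i\colon(x,z)\mapsto(\sigma(x),\bar z)$ over $X\setminus\mathrm{int}(N_i)$; each covers $\sigma$ on the respective piece, is an involution, and is conjugate complex linear on fibres. The only thing to verify is that these match after transport through the gluing isomorphism $\phi\colon E(\omega_{i,0}|_{\partial N_i})\to\partial N_i\times\mathbb{C}$, that is, that $\phi$ intertwines $\hat\sigma_{i,0}|_{\partial N_i}$ with standard conjugation.

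The key observation---and the step I expect to carry the real content---is that the frame along $\partial N_i$ furnished by $\phi$ is the unit radial section $s_0$, which is \emph{fixed} by $\hat\sigma_{i,0}$ in the equivariant sense $\hat\sigma_{i,0}\circ s_0=s_0\circ\sigma$. Indeed, $\sigma$ acts on the fibre $D_x$ of $N_i\cong D(\nu_i)\to X_i$ over $x\in X_i$ by the normal differential $T_x\sigma$, and by its very definition $\hat\sigma_{i,0}$ acts on the fibre of $\omega_{i,0}$ over $v\in D_x$ through the same $T_x\sigma$; hence $\hat\sigma_{i,0}(s_0(v))=(T_x\sigma(v),T_x\sigma(v))=s_0(\sigma(v))$. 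Since $\phi$ sends $s_0|_{\partial N_i}$ to the constant section $1$, as noted in the proof of Lemma \ref{linebundle}, and $\hat\sigma_{i,0}$ is conjugate linear, in the $s_0$-frame $\hat\sigma_{i,0}$ must act as $z\mapsto\bar z$---exactly the standard conjugation on $\partial N_i\times\mathbb{C}$. This forces the two local definitions to agree on the overlap, so $\hat\sigma_i$ is a well-defined involutive, conjugate complex linear bundle map covering $\sigma$, i.e.\ a $\sigma$-conjugation.

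For the Chern class I would appeal directly to Lemma \ref{linebundle} with $F=X_i$, which already yields $c_1(\omega_i)=[X_i]\in H^2(X;\mathbb{Z})$, the class dual to the characteristic submanifold. The whole difficulty is concentrated in the compatibility verification above: once one recognizes that the gluing frame $s_0$ is $\hat\sigma_{i,0}$-invariant---because $\sigma$ acts on the normal fibres by precisely the conjugation defining $\hat\sigma_{i,0}$---everything else is formal bookkeeping.
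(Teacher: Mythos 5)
Your proposal is correct and follows essentially the same route as the paper: define $\hat\sigma_i$ as $\hat\sigma_{i,0}$ over $N_i$ and as the standard conjugation on the trivial bundle over $X\setminus\mathrm{int}(N_i)$, check agreement along $\partial N_i$ using the fact that $\phi$ carries the radial section $s_0$ to the constant section $1$, and quote Lemma \ref{linebundle} for $c_1(\omega_i)=[X_i]$. Your explicit verification that $\hat\sigma_{i,0}\circ s_0=s_0\circ\sigma$ on $\partial N_i$ spells out the step the paper dismisses as ``clear,'' but the argument is the same.
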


In view of Lemma \ref{sigmaconjugatelinebundles} and the fact that the cohomology algebra $H^*(X;\mathbb Z_2)$ is generated by 
the classes $[X_i], 1\le i\le m$, the hypotheses of Proposition \ref{leray-hirsch}  are satisfied.  This leads to a 
description of the chomology $H^*(P(S,X);\mathbb Z_2)$ as a module over $H^*(Y;\mathbb Z_2)$ for any pair $(S,\alpha)$ 
where $S$ is a paracompact space and $Y=S/\mathbb Z_2$.
 Our aim is to describe $H^*(P(S,X);\mathbb Z_2)$ as an $H^*(Y;\mathbb Z_2)$-algebra in terms of generators and relations. 

Recall from \cite{mp} that the {\it integral} cohomology ring $H^*(X;\mathbb Z)$ is the quotient of the polynomial algebra  $\mathbb Z[x_1,\ldots, x_m]$ modulo the ideal $I$ generated by the following two types of elements:
\[
(i) ~x_{j_1}\ldots x_{j_r}=0~\textrm{~whenever~} Q_{j_1}\cap \cdots \cap Q_{j_r}=\emptyset,\]
\[ (ii)  ~\sum_{1\le j\le m} \langle u, v_j\rangle x_j=0~\forall u\in \hom(T,\mathbb S^1),\]
where $v_j=\Lambda(Q_j)\in \mathbf N$.
The element $x_j$ corresponds to $[X_j]\in H^2(X;\mathbb Z)$.  In particular, $X$ satisfies the hypothesis of Proposition \ref{leray-hirsch} and we have 
$H^*(P(S,X);\mathbb Z_2)\cong H^*(Y,\mathbb Z_2)\otimes H^*(X;\mathbb Z_2)$.  

For $u\in \mathbf N^\vee$, consider the complex line bundle $\omega_u:=\otimes_{1\le j\le m}\omega_j^{\otimes a_j}$ 
where $a_j=\langle u,v_j\rangle\in \mathbb Z$.   
Then $\omega_u$ is isomorphic to the trivial complex line bundle 
since $c_1(\omega_u)=\sum a_jc_1(\omega_j)=\sum a_j[X_j]=0$ in view of the relation (ii) above.  
Using the fact that $\bar \eta\cong \hom_\mathbb C(\eta,\epsilon_\mathbb C)$ and $ \eta\otimes \eta_1$  are $\sigma$-conjugate complex vector bundles when $\eta, \eta_1$  are $\sigma$-conjugate vector bundles 
 yields that $\omega_u$ is a $\sigma$-conjugate line bundle.  (See \cite[Example 2.2(iv)]{ns}.) In fact, in the case of $\bar\eta$, the $\sigma$-conjugation $\hat\sigma$ of $\eta$ is also a  $\sigma$-conjugation 
of $\bar\eta$. 

We have a $T\rtimes \mathbb Z_2$-equivariant cross-section $s_j:X\to E(\omega_j)$ whose zero locus equals $X_j$.  
We write $s_j^a$ to denote the corresponding cross-section of $\omega_j^{\otimes a}$  for $a\ge 1$; when 
$a<0$,  we set $s_j^a:=s_j^{ |a|},$ regarded as a section of $\bar\omega^{\otimes |a|}_j$.  (Note that $E(\omega)=E(\bar \omega)$). When $a=0$, $\omega_j^a$ is the trivial bundle and $s_j^a$ corresponds to the 
constant map $X\to \mathbb C$ to $x\mapsto 1$.

Let $\tilde x_j:=w_2(\hat\omega_j)\in P(S,X)$ where $\hat \omega_j:=P(S,\omega_j)$.   Then $\sum b_j\tilde x_j$ restricts 
to $\sum b_j x_j=w_2(\otimes \omega_j^{\otimes b_j})$ along the fibres of the $X$-bundle $P(S,X)\to Y$.  Let 
$p\in \fix(\sigma)$. 
 Denote by $s_{p}$ the cross section $Y\to P(S,X)$  defined as $[v]\mapsto [v,p]$.   
Since $H^1(X;\mathbb Z)=0$,  we have 
$H^2(P(S,X);\mathbb Z_2)
\cong H^2(Y;\mathbb Z_2)\oplus H^2(X;\mathbb Z_2)$ by Proposition \ref{leray-hirsch}.  

{\it Claim:}   $s_{p}^*(\tilde x_j)=0$ in $H^2(Y;\mathbb Z_2)$.  \\
To see this, note that $s_{p}:Y\to P(S,X)$  factors as follows: $Y\stackrel{\cong}{\to} P(S,p)\hookrightarrow P(S,X) .$  Now 
$\hat \omega_j|_{P(S,p)}=\xi_\alpha\oplus \epsilon_\mathbb{R}$ since $\omega_j|_{p}\cong \epsilon_\mathbb C=\{x_0\}\times \mathbb C$ (with standard complex conjugation). 
So $s_{p}^*(\tilde x_j)=w_2(\xi_\alpha\oplus \epsilon_\mathbb{R})=0$, as claimed.

It follows from the above Claim that $w_2(\otimes \hat \omega_j^{\otimes b_j})
=\sum b_j \tilde x_j.$   Taking $b_j=\langle u,v_j\rangle$ and using the isomorphism $P(S,\otimes \omega_j^{\otimes b_j})
\cong 
P(S, \omega_u)\cong P(S,\epsilon_\mathbb C)$ where the trivial complex line bundle has the standard conjugation, 
we obtain that, for any $u\in \mathbf N^\vee$,
\[\sum_{1\le j\le m} \langle u,v_j\rangle \tilde x_j=0.\eqno(7)\]

Next, suppose that $\cap_{1\le q\le r} Q_{j_q}=\emptyset$.   The Whitney sum $\omega:=\oplus_{1\le q\le r} \omega_{j_q}$ admits a cross-section $s:X\to E(\omega)$ given 
by $s(x)=(s_{j_1}(x),\ldots,s_{j_r}(x))$.  Clearly $s$  vanishes along $\cap_{1\le q\le r} X_{j_q}=\emptyset$, that is, $s$ is nowhere vanishing and so 
we obtain a splitting $\omega\cong \eta\oplus \epsilon_\mathbb C$.  The $\sigma$-conjugations on each summand $\omega_{j_q}$ 
of $\omega$ put together yields a $\sigma$-conjugation on $\omega$. 
Since $s$ is $T\rtimes\mathbb Z_2$-equivariant, 
the $\sigma$-cojugation on $\omega$ restricts to $\sigma$-conjugations on $\eta$ and $\epsilon_\mathbb C$, and, on the latter it is the standard conjugation. 
  It follows that $\hat \omega=\oplus_{1\le q\le r}\hat \omega_{j_q}=
\hat \eta\oplus \epsilon_\mathbb R\oplus \xi_\alpha.$ 
Hence the top Stiefel-Whitney class of $\hat \omega$ is zero. That is,
\[\prod_{1\le q\le r} \tilde x_{j_q}=0~\textrm{~whenever~$Q_{j_1}\cap \cdots\cap Q_{j_r}=\emptyset$~}.\eqno(8)\]

Let $A=H^*(Y;\mathbb Z_2)$ and let $A[\tilde x_1,\ldots, \tilde x_m]$ denote the polynomial algebra in the {\it indeterminates} $\tilde x_1,\ldots, \tilde x_m$.
As a consequence of (7) and (8) we obtain the following.

\begin{theorem}  \label{torus}
Let $X=X(Q,\Lambda)$ be a $T$-torus manifold where $X/T=Q$ is a homology polytope with $m$ facets.   Let $\sigma:X\to X$ be the involution $[t,q]\mapsto [t^{-1},q]$.   
Then, with the above notations,
 $H^*(P(S,X);\mathbb Z_2)$ is isomorphic, as an $A=H^*(Y;\mathbb Z_2)$-algebra, to the quotient 
$ R(Q,\Lambda):=A[\tilde x_1,\ldots, \tilde x_m]/I$ where 
the ideal $ I=I(Q,\Lambda)$ is generated by the following two types of elements: \\
(i) $\sum_{1\le j\le m}\langle u,v_j\rangle \tilde x_j,~u\in \mathbf N^\vee$, and, \\
(ii) $\prod_{1\le q\le r} \tilde x_{j_q}$ whenever $Q_{j_1}\cap \cdots \cap Q_{j_r}=\emptyset$. \\ The isomorphism is 
given by 
 $\tilde x_j\mapsto w_2(\hat \omega_j)\in H^2(P(S,X);\mathbb Z_2)$. 
\end{theorem}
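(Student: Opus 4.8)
The plan is to produce an explicit $A$-algebra homomorphism
$\Phi\colon R(Q,\Lambda)\to H^*(P(S,X);\mathbb Z_2)$ and then upgrade it to an isomorphism by comparing ranks of free $A$-modules. First I would send the indeterminate $\tilde x_j$ to the class $w_2(\hat\omega_j)\in H^2(P(S,X);\mathbb Z_2)$ and extend $A$-linearly and multiplicatively. This $\Phi$ is well defined precisely because the relations generating $I$ are killed: the generators of type (i) map to $\sum_{j}\langle u,v_j\rangle\, w_2(\hat\omega_j)$, which vanishes by Equation (7), and the generators of type (ii) map to $\prod_q w_2(\hat\omega_{j_q})$, which vanishes by Equation (8) whenever $Q_{j_1}\cap\cdots\cap Q_{j_r}=\emptyset$. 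Thus $\Phi$ descends to the quotient $R(Q,\Lambda)$.

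Next I would verify surjectivity. Under the fibre inclusion $\iota\colon X\to P(S,X)$ the class $w_2(\hat\omega_j)=\tilde x_j$ restricts to $c_1(\omega_j)\bmod 2=[X_j]=x_j$, by Lemma \ref{sigmaconjugatelinebundles}. Since the classes $[X_j]$, $1\le j\le m$, generate $H^*(X;\mathbb Z_2)$ as an algebra, I can choose a $\mathbb Z_2$-basis of $H^*(X;\mathbb Z_2)$ consisting of monomials in the $x_j$; by Proposition \ref{leray-hirsch} the corresponding monomials in the $\tilde x_j$ form an $A$-basis of $H^*(P(S,X);\mathbb Z_2)$. As these basis elements lie in the image of $\Phi$ and $\Phi$ is $A$-linear with $\Phi(1)=1$, the map $\Phi$ is surjective.

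The heart of the argument is to identify $R(Q,\Lambda)$ as a free $A$-module of the correct rank. The integral presentation recalled before the theorem gives $H^*(X;\mathbb Z)\cong \mathbb Z[x_1,\ldots,x_m]/I_X$, where $I_X$ is generated by the integral analogues of (i) and (ii). Because $X$ is a torus manifold over a homology polytope its integral cohomology is free abelian and concentrated in even degrees (Masuda--Panov \cite{mp}); hence $H^*(X;\mathbb Z_2)\cong H^*(X;\mathbb Z)\otimes\mathbb Z_2\cong \mathbb Z_2[x_1,\ldots,x_m]/\bar I_X$, where $\bar I_X$ is generated by the mod $2$ reductions of the defining relations, i.e.\ exactly the $\mathbb Z_2$-coefficient versions of (i) and (ii). Since the generators of $I$ in $A[\tilde x_1,\ldots,\tilde x_m]$ are the images of the generators of $\bar I_X$, and $\mathbb Z_2\hookrightarrow A$ is flat (every $\mathbb Z_2$-module is free), extension of scalars yields
\[
R(Q,\Lambda)=A[\tilde x_1,\ldots,\tilde x_m]/I\;\cong\;A\otimes_{\mathbb Z_2}\bigl(\mathbb Z_2[\tilde x_1,\ldots,\tilde x_m]/\bar I_X\bigr)\;\cong\;A\otimes_{\mathbb Z_2}H^*(X;\mathbb Z_2),
\]
so $R(Q,\Lambda)$ is a free $A$-module of rank $N:=\dim_{\mathbb Z_2}H^*(X;\mathbb Z_2)<\infty$ (finite since $X$ is compact). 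By Proposition \ref{leray-hirsch}, $H^*(P(S,X);\mathbb Z_2)$ is likewise a free $A$-module of rank $N$.

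Finally I would conclude: $\Phi$ is a surjective homomorphism of free $A$-modules of the same finite rank $N$, and a surjective endomorphism of a finitely generated module over a commutative ring is automatically injective, so $\Phi$ is an isomorphism of $A$-algebras. The main obstacle is the penultimate step, namely verifying that the size of $R(Q,\Lambda)$ matches that of $H^*(P(S,X);\mathbb Z_2)$, i.e.\ that base change from $\mathbb Z_2$ to $A$ neither loses nor creates relations; this rests on the torsion-freeness of $H^*(X;\mathbb Z)$ and the flatness of $A$ over $\mathbb Z_2$, both of which hold in the present setting. Everything else is a formal assembly of Equations (7) and (8) together with the Leray--Hirsch isomorphism of Proposition \ref{leray-hirsch}.
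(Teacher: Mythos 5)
Your proposal is correct and follows essentially the same route as the paper: well-definedness of the map $\tilde x_j\mapsto w_2(\hat\omega_j)$ via Equations (7) and (8), surjectivity via Proposition \ref{leray-hirsch}, and identification of $R(Q,\Lambda)$ as a free $A$-module of rank $\dim_{\mathbb Z_2}H^*(X;\mathbb Z_2)$ by base change from the integral presentation. The only cosmetic difference is at the end, where the paper observes that the map carries an $A$-basis of monomials to an $A$-basis (making injectivity immediate), while you invoke the fact that a surjection between finitely generated free modules of equal rank over a commutative ring is injective; both are valid.
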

\begin{proof} 
From the description of the cohomology ring of $X$ and the definition of $R(Q,\Lambda)$ it is clear that one has an 
isomorphism $R(Q,\Lambda)\cong H^*(X;A)\cong A\otimes_{\mathbb Z_2} H^*(X;\mathbb Z_2)$ of graded $A$-modules (by the universal coefficient theorem).   In particular $R(Q,\Lambda)$ is a free $A$-module of rank equal to 
$\dim _{\mathbb Z_2}H^*(X;\mathbb Z_2)<\infty$.    In fact, any $\mathbb Z_2$-basis consisting of monomials in $w_2(\omega_j)$ 
lifts an $A$-basis for $R(Q,\Lambda)$ got by replacing $w_2(\omega_j)$ by $\tilde x_j$. 

We have a well-defined $A$-algebra homomorphism $\theta: R(Q,\Lambda)\to H^*(P(S,X);\mathbb Z_2)$ defined by $\tilde x_j\mapsto w_2(\hat\omega_j), 1\le j\le m,$ in view of 
Equations (7) and (8). 
By Proposition \ref{leray-hirsch}, we see that $\theta$ is a surjective homomorphism of $A$-modules.   By the observation made above, 
as an 
$A$-module homomorphism, $\theta$ maps an $A$-basis to an $A$-basis and hence is an isomorphism.   
\end{proof}

\subsection{Grassmann manifolds and related spaces}

In this section our aim is to describe the $\mathbb Z_2$- cohomology ring of  
generalized Dold manifold $P(S, G_{n,k})$, which is fibred by the complex 
Grassmann manifold $X:=G_{n,k}=G_k(\mathbb C^n)$.  
The fixed point set of the (usual) complex conjugation $\sigma$ on $G_{n,k}$ is the real 
Grassmann manifold $\mathbb RG_{n,k}$.    Let $\xi=\xi_\alpha$ denote the line bundle associated to the double cover $S\to Y$.  We shall 
also denote by $\xi$ the line bundle over $P(S, G_{n,k})$ obtained as the pull back $\pi^*(\xi)$ via the projection $\pi:P(S,X)\to Y$ of the 
$G_{n,k}$-bundle.  Denote by $\gamma_{n,k}, \beta_{n,k}$ the tautological $k$-plane bundle and its orthogonal complement bundle, which is of 
rank $(n-k)$.  Note that the complex conjugation on $\mathbb C^n$ yields $\sigma$-conjugations on $\gamma_{n,k}$ and on 
$\beta_{n,k}$.  Moreover, we have an isomorphism 
\[\gamma_{n,k}\oplus \beta_{n,k}\cong \epsilon^n_\mathbb C.\] 
This yields an isomorphism of real vector bundles: (cf. \cite[Example 2.4(ii)]{ns})
\[\hat\gamma_{n,k}\oplus \hat\beta_{n,k}\cong n\xi_\alpha\oplus n\epsilon_\mathbb R\eqno(9)\]
where $\hat \omega:=P(S,\omega)$.   Consequently, the following relation among the Stiefel-Whitney classes holds, where 
$y=w_1(\xi_\alpha)$:
\[w(\hat \gamma_{n,k}).w(\hat \beta_{n,k})=(1+y)^n.\]
We rewrite the above relation in terms of Stiefel-Whitney polynomials:
\[w(\hat \beta_{n,k},t)=(1+yt)^n.w(\hat \gamma_{n,k},t)^{-1}=\sum_{j\ge 0} a_j t^j=a(t)\eqno(10)\]
where $a_j:=a_j(y, w_1(\hat\gamma_{n,k}), \ldots, w_{j}(\hat \gamma_{n,k}))$ is the homogeneous polynomial 
of (total) degree $j$ in 
$(1+y)^n.w(\hat \gamma_{n,k})^{-1}$. 

Since $H^1(P(S,G_{n,k});\mathbb Z_2)\cong H^1(Y;\mathbb Z_2)=\mathbb Z_2y$, it is clear that $w_1(\hat\gamma_{n,k}),w_1(\hat\beta_{n,k})\in \mathbb Z_2y$.   Moreover,  $w_1(\hat\gamma_{n,k})=0$ (resp. 
$w_1(\hat\beta_{n,k})=0$) if and only if $k$ is even (resp. $n-k$ is even) as a consequence of Equation (4) (or by a 
direct argument).  Hence 
we see that, $w_1(\hat\beta_{n,k})=ny+w_1(\hat\gamma_{n,k})\in \mathbb Z_2y$.    This 
also follows from the above equation.  
 Using Equation (4) and induction,  
we see that the Stiefel-Withney class $w_{j}(\hat \beta_{n,k})=a_j$ is expressible as a polynomial in 
$y,w_2(\hat \gamma_{n,k}),\ldots, w_{2i}(\hat\gamma_{n,k})$ for all $j$ where $i=\lfloor j/2\rfloor$.  
We note that, by degree considerations, $a_j$ is divisible by $y$ when $j$ is odd.  

We may view (10) as {\it defining} $w_{j}(\hat \beta_{n,k}), 1\le j\le 2(n-k),$ as the polynomial $a_{2j}$.  
Since $w_{j}(\hat \beta_{n,k})=0$ for $j>2n-2k$, Equation (10) leads to the relations 
\[a_{j}=a_j(y, w_2(\hat\gamma_{n,k}),\ldots,w_{2k}(\hat\gamma_{n,k}))=0, j>2n-2k.\eqno(11)\]   

Let $I\subset \mathbb Z_2[y, w_{2i}(\hat\gamma_{n,k}); 1\le i\le k]$ be the ideal generated by 
$a_j, j>2n-2k$.  Suppose that 
the height of $y$ equals $N\in \mathbb N$.  Then $(1+yt)^{-1}=\sum_{0\le j\le N}y^jt^j$ and we have 
$a(t). (\sum_{0\le j\le N} y^jt^j)w(\hat\gamma_{n,k},t)=1$.   It follows that $a_{N+2n+j}$ is in the 
ideal generated by $a_{2n-2k+i},1\le i\le N+2k,$ for all $j\ge 1$ and so $I$ is generated by 
$a_{2n-2k+i}, 1\le i\le N+2k$.  Moreover, it is easily seen that 
$a_{2n+i}$ is in the ideal generated by $y, a_{j}, 2n-2k+1\le j\le 2n$ for all $i\ge 1$.

Consider the graded polynomial algebra  $R:=\mathbb Z_2[y,\hat w_{2j};1\le j\le k]$ in the indeterminates $y, 
\hat w_{2j}, 1\le j\le k,$ where $\deg y=1, \deg \hat w_{2j}=2j$.   We regard 
$a_{2j}= a_{2j}(y, \hat w_{2}, \ldots,  
\hat w_{2j})$ as elements of $R$.   

\begin{lemma}\label{regularsequence}
The elements $y, a_{2j}\in R, n-k<j\le n,$ form a regular sequence in $R$ and $R/\langle y, a_{2j},n-k<j\le n\rangle 
\cong H^*(G_{n,k};\mathbb Z_2)$.  
\end{lemma}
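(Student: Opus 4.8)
The plan is to establish the ring isomorphism first and then deduce the regular-sequence assertion from it by a dimension count, using that $R$ is a polynomial ring over a field.

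First I would reduce modulo $y$. Since $y$ is one of the polynomial generators of $R=\mathbb Z_2[y,\hat w_2,\ldots,\hat w_{2k}]$, it is a non-zero-divisor and $R/\langle y\rangle\cong \mathbb Z_2[\hat w_2,\ldots,\hat w_{2k}]$. The crucial point is to identify the images $\bar a_{2j}:=a_{2j}|_{y=0}$. In the defining relation (10) the series $w(\hat\gamma_{n,k},t)=\sum_i w_i(\hat\gamma_{n,k})t^i$ has, by Equation (4), each odd coefficient $w_{2i+1}(\hat\gamma_{n,k})$ equal to $y$ times a polynomial in $y^2$ and the even classes; hence setting $y=0$ annihilates every odd term, so that $w(\hat\gamma_{n,k},t)|_{y=0}=1+\hat w_2t^2+\cdots+\hat w_{2k}t^{2k}$, while $(1+yt)^n|_{y=0}=1$. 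Therefore $a(t)|_{y=0}=(1+\hat w_2t^2+\cdots+\hat w_{2k}t^{2k})^{-1}$, and $\bar a_{2j}$ is precisely the coefficient of $t^{2j}$ in this inverse.

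Next I would match $\bar a_{2j}$ with the classical Borel presentation of the Grassmannian. Under the identification $\hat w_{2i}\leftrightarrow w_{2i}(\gamma_{n,k})=c_i(\gamma_{n,k})$, the relation $w(\gamma_{n,k})w(\beta_{n,k})=1$ shows that $\bar a_{2j}$ corresponds exactly to $w_{2j}(\beta_{n,k})$. Since $H^*(G_{n,k};\mathbb Z_2)\cong \mathbb Z_2[w_2(\gamma_{n,k}),\ldots,w_{2k}(\gamma_{n,k})]/\langle w_{2j}(\beta_{n,k}):n-k<j\le n\rangle$ (see \cite[Chapter 6]{ms}), we obtain
\[
R/\langle y,a_{2j}:n-k<j\le n\rangle\cong \mathbb Z_2[\hat w_2,\ldots,\hat w_{2k}]/\langle \bar a_{2j}:n-k<j\le n\rangle\cong H^*(G_{n,k};\mathbb Z_2),
\]
which is the second assertion.

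Finally I would deduce the regular-sequence claim. The ring $R$ is a polynomial ring over the field $\mathbb Z_2$ in the $k+1$ variables $y,\hat w_2,\ldots,\hat w_{2k}$, hence Cohen--Macaulay of Krull dimension $k+1$. The sequence $y,a_{2(n-k+1)},\ldots,a_{2n}$ consists of exactly $k+1$ homogeneous elements of positive degree, and by the isomorphism just established $R/\langle y,a_{2j}\rangle\cong H^*(G_{n,k};\mathbb Z_2)$ is finite-dimensional over $\mathbb Z_2$, hence Artinian. A family of $\dim R$ homogeneous positive-degree elements with Artinian quotient is a homogeneous system of parameters, and in a Cohen--Macaulay graded ring every system of parameters is a regular sequence (note in particular that permutability makes the ordering with $y$ first harmless); this gives the first assertion. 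I expect the main obstacle to be the bookkeeping in the first step, namely verifying that the substitution $y=0$ collapses the mixed polynomial $a_{2j}(y,\hat w_2,\ldots,\hat w_{2j})$ to the pure Grassmannian relation $w_{2j}(\beta_{n,k})$; this rests entirely on the structural fact from Equation (4) that the odd Stiefel--Whitney classes of $\hat\gamma_{n,k}$ are divisible by $y$. Once this identification is secured, both conclusions follow formally.
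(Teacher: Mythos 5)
Your proposal is correct. The first half --- passing to $\bar R = R/\langle y\rangle$ and observing that Equation (4) forces the odd classes $w_{2i+1}(\hat\gamma_{n,k})$ to vanish mod $y$, so that $\bar a_{2j}$ is the coefficient of $t^{2j}$ in $(1+\hat w_2t^2+\cdots+\hat w_{2k}t^{2k})^{-1}$ --- is exactly the reduction the paper performs. Where you diverge is in the endgame: the paper simply cites Bott--Tu \S 23 for the simultaneous facts that these coefficients form a regular sequence in $\bar R$ and that the quotient is $H^*(G_{n,k};\mathbb Z_2)$, whereas you first establish the ring isomorphism from the classical presentation of the complex Grassmannian (the relations $w(\gamma_{n,k})w(\beta_{n,k})=1$ with $w_{2j}(\beta_{n,k})=0$ for $j>n-k$) and then recover the regularity assertion formally: $k+1$ homogeneous positive-degree elements of the Cohen--Macaulay ring $R$ of Krull dimension $k+1$ with Artinian quotient form a homogeneous system of parameters, hence a regular sequence, and permutability of homogeneous regular sequences disposes of the ordering with $y$ first. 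Your route makes the regularity claim a formal consequence of the isomorphism at the cost of invoking the Cohen--Macaulay/system-of-parameters machinery; the paper's route outsources both claims to the literature. Both arguments are sound, and they rest on the same key structural input, namely that Equation (4) makes the odd Stiefel--Whitney classes of $\hat\gamma_{n,k}$ divisible by $y$.
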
 
\begin{proof} 
To see this, it suffices 
to show that $\bar a_{2j}, n-k<j\le n$ is a regular sequence in $\bar R:=R/\langle y\rangle$ where $\bar a_{2j}:=a_{2j} \mod \langle y\rangle \in \bar R$.  Note that $\bar a_{2j}=h_{2j}(\hat w)$ where $h_{2j}(\hat w)$ denotes the complete symmetric polynomial of degree $2j$ in $\hat w_2,\ldots, w_{2k}$, that is, $h_{2j}$ equals the coefficient of 
$t^{2j}$ in $(1+\hat w_2t^2+\cdots+\hat w_{2k}t^{2k})^{-1}$.  One can show, as in \cite[\S23]{bott-tu}, that $\bar a_{2j}, n-k<j\le n$, is a 
regular sequence in $\bar R$ and that $\bar R/\langle a_{2j},n-k<j\le n\rangle\cong H^*(G_{n,k};\mathbb Z_2)$.
\end{proof}

 It follows from the above lemma that $y^{m+1}, a_{2j},n-k<j\le n$, is a regular sequence in $R$ and that the quotient $R/\langle y^{m+1}, a_{2j};n-k<j\le n\rangle$ is isomorphic to $\mathbb Z_2[y]/\langle y^{m+1}\rangle \otimes 
H^*(G_{n,k};\mathbb Z_2)=H^*(\mathbb RP^m;\mathbb Z_2)\otimes H^*(G_{n,k};\mathbb Z_2)$ as graded 
$\mathbb Z_2$-vector spaces.  We are ready to prove the following proposition.
Recall that $\xi_\alpha=\xi$ denotes the pull-back of the Hopf line bundle over $\mathbb RP^m$ via the projection of the $X$-bundle $P(\mathbb S^m,X)\to \mathbb RP^m$.

\begin{proposition}  \label{sphere-gnk}
With the above notations, 
the $\mathbb Z_2$-cohomology algebra $H^*(P(\mathbb S^m, G_{n,k});\mathbb Z_2) $ is isomorphic to $R/I$ 
where $I$ is the ideal generated by $y^{m+1}, a_{2j}(\hat w),n-k<j\le n,$ where $\hat w_{2j}$ corresponds to $w_{2j}(\hat \gamma_{n,k})$ and $y$ to $w_1(\xi)$. 
\end{proposition}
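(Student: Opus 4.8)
The plan is to build an explicit homomorphism from $R/I$ onto $H^*(P(\mathbb S^m,G_{n,k});\mathbb Z_2)$ and then force it to be an isomorphism by a graded-dimension count, the dimensions having already been pinned down by Lemma \ref{regularsequence} and the discussion following it. First I would define the $\mathbb Z_2$-algebra homomorphism $\phi:R\to H^*(P(\mathbb S^m,G_{n,k});\mathbb Z_2)$ on generators by $y\mapsto w_1(\xi)$ and $\hat w_{2j}\mapsto w_{2j}(\hat\gamma_{n,k})$ for $1\le j\le k$. Since $R$ is a free polynomial algebra, $\phi$ is automatically well defined, and it is a homomorphism of $A$-algebras once we let $A=\mathbb Z_2[y]/\langle y^{m+1}\rangle$ act through $y$.

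Next I would verify that $I\subseteq\ker\phi$, so that $\phi$ descends to $\bar\phi:R/I\to H^*(P(\mathbb S^m,G_{n,k});\mathbb Z_2)$. For the generator $y^{m+1}$ I would use that $\xi$ is the pull-back $\pi^*$ of the Hopf bundle over $\mathbb RP^m$, so that $\phi(y^{m+1})=w_1(\xi)^{m+1}=\pi^*(y^{m+1})=0$ because $y^{m+1}=0$ in $H^*(\mathbb RP^m;\mathbb Z_2)$. For the generators $a_{2j}$ with $n-k<j\le n$, the key observation is that, by the defining relation (10) (with the odd classes eliminated via (4)), applying $\phi$ to the polynomial $a_{2j}(y,\hat w_2,\ldots,\hat w_{2j})$ yields exactly the Stiefel-Whitney class $w_{2j}(\hat\beta_{n,k})$; since $\hat\beta_{n,k}$ is a real bundle of rank $2(n-k)$, these vanish for $2j>2(n-k)$, which is precisely relation (11). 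Hence $\phi(a_{2j})=0$ for $n-k<j\le n$ and $\bar\phi$ is well defined.

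I would then prove that $\bar\phi$ is surjective. The fibre inclusion $\iota:G_{n,k}\hookrightarrow P(\mathbb S^m,G_{n,k})$ satisfies $\iota^*(\hat\gamma_{n,k})\cong\gamma_{n,k}$, so $\iota^*$ carries each $w_{2j}(\hat\gamma_{n,k})$ to $w_{2j}(\gamma_{n,k})$, and these classes generate $H^*(G_{n,k};\mathbb Z_2)$. By the $\mathbb Z_2$-cohomology extension of the fibre furnished by Proposition \ref{leray-hirsch}, $H^*(P(\mathbb S^m,G_{n,k});\mathbb Z_2)$ is a free $A$-module whose reduction modulo $y$ is identified by $\iota^*$ with $H^*(G_{n,k};\mathbb Z_2)$. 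Since $A$ is a local Artinian ring with maximal ideal $(y)$ and residue field $\mathbb Z_2$, a Nakayama argument shows that the classes $w_{2j}(\hat\gamma_{n,k})$ together with $y$ generate the whole ring; thus $\phi$, and hence $\bar\phi$, is onto.

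Finally I would close the argument by comparing dimensions. As graded $\mathbb Z_2$-vector spaces one has $H^*(P(\mathbb S^m,G_{n,k});\mathbb Z_2)\cong H^*(\mathbb RP^m;\mathbb Z_2)\otimes H^*(G_{n,k};\mathbb Z_2)$ by Proposition \ref{perfectcells} (equally by Proposition \ref{leray-hirsch}), while $R/I\cong H^*(\mathbb RP^m;\mathbb Z_2)\otimes H^*(G_{n,k};\mathbb Z_2)$ as graded $\mathbb Z_2$-vector spaces by the regular-sequence computation recorded after Lemma \ref{regularsequence}. Both sides are therefore finite-dimensional with equal dimension in each degree, so the surjection $\bar\phi$ must be an isomorphism. \textbf{The main obstacle} I anticipate is the second step: checking cleanly that $\phi$ sends $a_{2j}$ to $w_{2j}(\hat\beta_{n,k})$ and that the listed relations already cut out the kernel. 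It is precisely the prior identification of $R/I$ with $H^*(\mathbb RP^m;\mathbb Z_2)\otimes H^*(G_{n,k};\mathbb Z_2)$ as graded vector spaces that removes the need to prove injectivity directly, since injectivity then follows automatically from surjectivity and the dimension match.
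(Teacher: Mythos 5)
Your proposal is correct and follows essentially the same route as the paper: define the ring homomorphism on generators, use Equations (10)--(11) to see that the listed relations map to zero (so the map factors through $R/I$), deduce surjectivity from Proposition \ref{leray-hirsch}, and conclude injectivity from the graded-dimension match supplied by the regular-sequence computation after Lemma \ref{regularsequence}. The extra details you supply (the Nakayama-style argument for surjectivity and the explicit identification $\phi(a_{2j})=w_{2j}(\hat\beta_{n,k})$) are just elaborations of steps the paper leaves implicit.
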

\begin{proof}
Consider the homomorphism $\eta: R\to H^*(P(\mathbb S^m,G_{n,k});\mathbb Z_2)$ of rings defined as 
$\eta(\hat w_{2j})=w_{2j}(\hat \gamma_{n,k})$ and $\eta(y)=w_1(\xi)$.  By Equation (10) and Proposition \ref{leray-hirsch}, $\eta$ is surjective.  It follows from  Equation (11) that $\eta$ factors as $ R\to R/I\stackrel{\bar \eta}{\to} H^*(P(\mathbb S^m;G_{n,k});\mathbb Z_2)$.  By the discussion preceding the statement of the proposition, we see that $\bar \eta$ is an isomorphism since 
$R/I$ and $H^*(P(\mathbb S^m,G_{n,k});\mathbb Z_2)=H^*(\mathbb RP^m;\mathbb Z_2)\otimes H^*(G_{n,k};\mathbb Z_2)$ have the same dimension.  
\end{proof}

Next we prove the following theorem. 

\begin{theorem}  \label{gen-grassmann}
We keep the above notations.  Suppose that $S$ is paracompact. \\
The cohomology algebra $H^*(P(S,G_{n,k});\mathbb Z_2)$ is isomorphic, as an $H^*(Y;\mathbb Z_2)$-algebra, to 
$H^*(Y;\mathbb Z_2)[\hat w_2,\ldots, \hat w_{2k}]/\mathcal I$, where 
$\mathcal I$ is generated by $a_{2j}, n-k<j\le n,$ under an isomorphism 
that maps $\hat w_{2j}$ to $w_{2j}(\hat \gamma_{n,k})$.  
\end{theorem}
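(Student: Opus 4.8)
The plan is to reduce the general paracompact case to the already-established case $S = \mathbb{S}^m$ treated in Proposition \ref{sphere-gnk}, using the functoriality of the Dold construction together with the free module structure guaranteed by Proposition \ref{leray-hirsch}. First I would observe that Proposition \ref{leray-hirsch} applies here: the mod $2$ cohomology of $G_{n,k}$ is generated by the Chern classes $c_j(\gamma_{n,k}) = w_{2j}(\gamma_{n,k})$ of the $\sigma$-conjugate tautological bundle, so $H^*(P(S,G_{n,k});\mathbb{Z}_2)$ is a free $A := H^*(Y;\mathbb{Z}_2)$-module with a basis $B$ lifting any $\mathbb{Z}_2$-basis of $H^*(G_{n,k};\mathbb{Z}_2)$. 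This is the structural backbone that makes an algebra isomorphism follow from a module isomorphism.

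Next I would construct the candidate homomorphism. Define an $A$-algebra homomorphism $\eta: A[\hat w_2,\ldots,\hat w_{2k}] \to H^*(P(S,G_{n,k});\mathbb{Z}_2)$ by $\hat w_{2j} \mapsto w_{2j}(\hat\gamma_{n,k})$. The relations $a_{2j} = 0$ for $n-k < j \le n$ hold in the target because of Equation (10): the Stiefel-Whitney polynomial identity $w(\hat\beta_{n,k},t) = (1+yt)^n \cdot w(\hat\gamma_{n,k},t)^{-1}$ forces the coefficients $a_{2j}$ with $2j > 2(n-k)$ to vanish, since $\hat\beta_{n,k}$ has rank $2(n-k)$. (Here I use that the isomorphism (9), valid over paracompact $S$ by the extension of the Stiefel-Whitney formula discussed in \S2.3, holds in this generality.) Hence $\eta$ descends to $\bar\eta: A[\hat w_2,\ldots,\hat w_{2k}]/\mathcal{I} \to H^*(P(S,G_{n,k});\mathbb{Z}_2)$. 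Surjectivity of $\bar\eta$ follows since the $w_{2j}(\hat\gamma_{n,k})$ together with $A$ generate the target, again by Proposition \ref{leray-hirsch} and Equation (10) (which expresses the classes $w_*(\hat\beta_{n,k})$ in terms of them).

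For injectivity I would argue that both sides are free $A$-modules of the same rank and that $\bar\eta$ carries an $A$-basis to an $A$-basis. The target is free of rank $\dim_{\mathbb{Z}_2} H^*(G_{n,k};\mathbb{Z}_2)$ by Proposition \ref{leray-hirsch}. For the source, I would invoke Lemma \ref{regularsequence}: the images $\bar a_{2j}$ of $a_{2j}$ modulo $\langle y\rangle$ form a regular sequence in $\bar R = \mathbb{Z}_2[\hat w_2,\ldots,\hat w_{2k}]$ with quotient $H^*(G_{n,k};\mathbb{Z}_2)$. Consequently $A[\hat w_2,\ldots,\hat w_{2k}]/\mathcal{I}$ is obtained from $H^*(G_{n,k};\mathbb{Z}_2)$ by extension of scalars along $\mathbb{Z}_2 \to A$, making it a free $A$-module of the same rank with a monomial basis in the $\hat w_{2j}$. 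A surjective $A$-module map between free $A$-modules of equal finite rank in each graded degree is an isomorphism, so $\bar\eta$ is an isomorphism of $A$-algebras.

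The main obstacle I anticipate is handling the case when $Y$ is infinite-dimensional (so $A$ is not a finite $\mathbb{Z}_2$-algebra), where "equal rank forces isomorphism" must be applied degree by degree rather than globally. This is where the freeness statements from Proposition \ref{leray-hirsch} and the regular-sequence structure from Lemma \ref{regularsequence} do the essential work: in each fixed cohomological degree both modules have the same finite $\mathbb{Z}_2$-dimension, and a graded surjection of graded free $A$-modules that is a dimension-preserving bijection on a homogeneous basis is automatically injective. A secondary point requiring care is confirming that the relation (9), and hence (10), remains valid over a general paracompact $S$ rather than only over spheres; but this is precisely what the extension of the Stiefel-Whitney class formula in \S2.3 (via the classifying map $Y \to \mathbb{RP}^\infty$ and naturality) provides, so I would cite that discussion rather than reprove it.
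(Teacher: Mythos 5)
Your proposal reaches the right conclusion but takes a genuinely different route from the paper at the decisive step. The paper does not argue directly over a general $S$: it writes $\mathcal R/\mathcal I\cong H^*(Y;\mathbb Z_2)\otimes_A(R/I)$, where $A\subset H^*(Y;\mathbb Z_2)$ is the subalgebra generated by $y=w_1(\xi_\alpha)$ and $R=A[\hat w_2,\ldots,\hat w_{2k}]$, identifies $R/I$ with $H^*(P(\mathbb S^m,G_{n,k});\mathbb Z_2)\cong A\otimes H^*(G_{n,k};\mathbb Z_2)$ by quoting Proposition \ref{sphere-gnk}, passes to $\mathbb S^\infty$ by skeletal approximation, and then treats general $S$ by pulling back along the classifying map $Y\to\mathbb RP^\infty$. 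You instead work over $H^*(Y;\mathbb Z_2)$ from the outset and argue that a surjection of free modules carrying a basis to a basis is an isomorphism; this is closer in spirit to the paper's proof of Theorem \ref{torus} than to its proof of Theorem \ref{gen-grassmann}, and it avoids the three-stage reduction. Your points that Equations (9)--(11) persist over paracompact $S$ via the discussion in \S 2.3, and that Proposition \ref{leray-hirsch} supplies both surjectivity and the freeness of the target on monomials in the $w_{2j}(\hat\gamma_{n,k})$, agree with the paper.

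The one step that does not hold as written is the claim that $H^*(Y;\mathbb Z_2)[\hat w_2,\ldots,\hat w_{2k}]/\mathcal I$ is obtained from $H^*(G_{n,k};\mathbb Z_2)$ by extension of scalars along $\mathbb Z_2\to H^*(Y;\mathbb Z_2)$. The generators $a_{2j}=a_{2j}(y,\hat w_2,\ldots,\hat w_{2j})$ of $\mathcal I$ involve $y$, so $\mathcal I$ is not the extension of an ideal of $\mathbb Z_2[\hat w_2,\ldots,\hat w_{2k}]$, and Lemma \ref{regularsequence} (which concerns the reductions $\bar a_{2j}=h_{2j}$ modulo $y$) does not by itself identify the quotient. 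The conclusion you need --- that the monomials lifting a $\mathbb Z_2$-basis of $H^*(G_{n,k};\mathbb Z_2)$ generate $\mathcal R/\mathcal I$ over $H^*(Y;\mathbb Z_2)$ --- is true, but requires an argument: either the paper's device of factoring through the subalgebra generated by $y$ and invoking Proposition \ref{sphere-gnk}, or a direct induction on the degree in the $\hat w_{2j}$ (reduce a monomial modulo the $h_{2j}$, replace each $h_{2j}$ by $a_{2j}+(h_{2j}-a_{2j})$, and note that $h_{2j}-a_{2j}$ lies in $y\cdot\mathcal R$ and so has strictly smaller $\hat w$-degree, allowing the induction to close). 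Once generation is established, injectivity of $\bar\eta$ is immediate because the images of these monomials form an $H^*(Y;\mathbb Z_2)$-basis of the target by Proposition \ref{leray-hirsch}; I would also drop your fallback argument comparing finite $\mathbb Z_2$-dimensions degree by degree, since $H^*(Y;\mathbb Z_2)$ need not be degreewise finite-dimensional for an arbitrary paracompact $S$, whereas the basis-to-generating-set comparison needs no such hypothesis.
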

\begin{proof}   Set $\mathcal R:=H^*(Y;\mathbb Z_2)[\hat w_2,\ldots, \hat w_{2k}]$. 
In view of Equation (11), it is clear that we have a surjective $H^*(Y;\mathbb Z_2)$-algebra homomorphism $\mathcal R/\mathcal I\to H^*(P(S,X);\mathbb Z_2)$ where  $\hat w_{2j}$ maps to $w_{2j}(\hat \gamma_{n,k})$.  
Therefore it suffices to show that $\mathcal R/\mathcal I$ is isomorphic to $H^*(P(S,X);\mathbb Z_2)$ as an $H^*(Y;\mathbb Z_2)$-module. 

Let $A$ denote the $\mathbb Z_2$-subalgebra of $H^*(Y;\mathbb Z_2)$ generated by $y=w_1(\xi_\alpha)$ and let 
$R=A[\hat w_{2j};1\le j\le k]$.   Then $\mathcal R=H^*(Y;\mathbb Z_2)\otimes_AR.$     Let $I\subset R$ 
denote the ideal generated by $a_{2j}, n-k<j\le n$.  Then $\mathcal R/\mathcal I\cong H^*(Y;\mathbb Z_2)\otimes_A (R/I)$.

Suppose that $(S,\alpha)=(\mathbb S^m,-id)$.  By Proposition \ref{sphere-gnk}, we have $R/I\cong H^*(P(\mathbb S^m;G_{n,k});\mathbb Z_2)\cong A\otimes H^*(G_{n,k};\mathbb Z_2)$.  So $\mathcal R/\mathcal I\cong H^*(Y;\mathbb Z_2)\otimes_A A\otimes H^*(G_{n,k};\mathbb Z_2)
\cong H^*(Y;\mathbb Z_2)\otimes H^*(G_{n,k};\mathbb Z_2)$.  

Next suppose that $(S,\alpha) =(\mathbb S^\infty,-id)$.  The inclusion $\mathbb S^m\hookrightarrow \mathbb S^\infty$ defines an inclusion $j_m: P(\mathbb S^m,X)\hookrightarrow P(\mathbb S^\infty,X)$ which induces a
$H^*(Y;\mathbb Z_2)$-algebra homomorphism in cohomology.  Moreover, $j^*_m$ is an isomorphism up to dimension $m-1$.   From this observation it follows that the theorem holds for $(\mathbb S^\infty, -id)$.  
 In the general case,   the result follows from functoriality and the fact that the $X$-bundle $P(S,X)$ arises as a pull-back of the $X$-bundle $P(\mathbb S^\infty,X)$ via a classifying map $S\to \mathbb RP^\infty$.   This completes the proof.
\end{proof}

Proposition \ref{sphere-gnk} and 
Theorem \ref{gen-grassmann} are valid when the Grassmann manifold is replaced by complex flag manifolds.   More precisely, 
let $\nu :=n_1,\ldots, n_r$ be  an increasing  sequence of positive numbers and let $n=\sum_{1\le j\le r} n_j$. Denote by $F_\nu$ 
the complex flag manifold whose elements are complex vector subspaces $\underline U:=(U_1,\ldots, U_r)$  of $\mathbb C^n$ where $U_i\perp U_j$ for $i\ne j$ and $\dim U_j=n_j, 1\le j\le r$.  Then $F_\nu\cong U(n)/(U(n_1)\times \cdots \times U(n_r))$ has a natural structure of a complex manifold given by the usual inclusion of $U(n)\subset GL_n(\mathbb C)$ so that $F_\nu\cong GL_n(\mathbb C)/P_\nu$ where 
$P_\nu$ is the subgroup which is block upper-triangular, where the diagonal sizes are $n_1,\ldots, n_r$.  (Under this identification, 
$\underline U$ corresponds to the sequence $U_1\subset U_1+U_2\subset \cdots\subset U_1+\cdots+U_r=\mathbb C^n$.) 
 It is well-known that $F_\nu$ has a CW-structure given by {\it Schubert cells} which are all even dimensional.  The Schubert cells are obtained as $B$-orbits of $T$-fixed points 
where $T\subset GL_n(\mathbb C)$ is the diagonal subgroup and $B\subset P_\nu$ is the group of upper triangular matrices.  Their  closures are the {\it Schubert varieties} in $F_\nu$.   
The complex conjugation in $\mathbb C^n$ induces a complex conjugation $\sigma$ on $F_\nu$, and, moreover, $\sigma $ 
stabilizes each Schubert variety.  The fixed points of $\sigma$ is the real flag manifold $\mathbb RF_\nu\cong O(n)/(O(n_1)\times 
\cdots\times O(n_r))$ consisting of $\underline U$ where $U_j\cap \mathbb R^n$ 
is $n_j$-dimensional for all $j$.    
 
We denote 
by $\gamma_{\nu , j}$ (or more briefly $\gamma_j$), the complex vector bundle over $F_\nu$ of rank $n_j$ whose fibre over 
$\underline U$ is the vector space $U_j$.  This is the pull-back of the bundle $\gamma_{n,n_j}$ on $G_{n,n_j}$ via the projection 
$F_\nu\to G_{n,n_j}$ that sends $\underline U$ to $U_j$.   The complex conjugation  $\mathbb C^n$ leads to a $\sigma$-conjugation 
on $\hat \sigma_j$ of $\gamma_j$.   Also  
one has a natural isomorphism of vector bundles
 \[ \bigoplus_{1\le j\le r} \gamma_j\cong n\epsilon_\mathbb C\eqno(12)\]
which respects $\sigma$-conjugation, as in the case of Grassmann manifolds.    The integral cohomology ring of $F_\nu$ is generated 
by $c_{i,j}:=c_i(\gamma_j), 1\le i\le n_j, 1\le j\le r,$ where the only relations among the $c_{i,j}$ are generated by the following 
(inhomogeneous) relation: 
$\prod_{1\le j\le r} c(\gamma_j)=1.$   It follows that $w_{2i,j}=w_{2i}(\gamma_j), 1\le i\le n_j, 1\le j\le r$, generate $H^*(F_\nu;\mathbb Z_2)$ and the 
relations among these generators are all consequences of $\prod_{1\le j\le n_r} w(\gamma_j,t)=1$.  

Suppose that $(S,\alpha)$ is a paracompact space where $\alpha$ is a fixed point free involution.  Then we have the real vector bundles 
$\hat\gamma_j=P(S,\gamma_j)$ over 
the generalized Dold space $P(S,F_\nu)$.  One has the following isomorphism of real vector bundles, resulting from the isomorphism (12):
\[\bigoplus_{1\le j\le r} \hat \gamma_j\cong n\xi_\alpha\oplus n\epsilon_\mathbb R\]
where $\xi_\alpha$ is the real line bundle associated to the double cover $S\times F_\nu\to P(S, F_\nu)$.   Therefore we obtain 
\[ w(\hat \gamma_r,t)=(1+yt)^n.\prod_{1\le j<r} w(\hat \gamma_j,t)^{-1}.\eqno(13)\]  
Then $H^*(P(S,F_\nu);\mathbb Z_2)$ is isomorphic to $H^*(Y;\mathbb Z_2)\otimes H^*(F_\nu;\mathbb Z_2)$ as an $H^*(Y;\mathbb Z_2)$-module, by Proposition \ref{leray-hirsch}.   Arguing as in the proof of Theorem \ref{gen-grassmann}, we observe that 
$H^*(P(S;F_\nu);\mathbb Z_2)$ is generated by as $H^*(Y;\mathbb Z_2)$-algebra by $\hat w_{2i,j}:=w_{2i}(\hat \gamma_j), 1\le i\le n_j, 1\le j<r$.  Using 
Equation (13), we obtain a regular sequence $a_{2s}, n_r<s\le n,$ in the polynomial algebra $R_\nu=\mathbb Z_2[\hat w_{2i,j}\mid 1\le i\le n_j, 1\le j\le r]$ that correspond to the coefficient of $t^{2s} $ in $(1+ty)^n(\prod \hat w_j(t))$ (rewriting $\hat w_{2i+1}$ in terms 
of $y, \hat w_{2l,j}, l\le i,$ using Equation (4)).  

We set $\mathcal R_\nu$ to the polynomial algebra over $H^*(Y;\mathbb Z_2)$ generated by `indeterminates' $\hat w_{2i,j}, 1\le i\le n_j,1\le j< r$, and let 
$\mathcal I_\nu$ be the ideal of $\mathcal R_\nu$ generated by the elements $a_{2i}=a_{2i} (y^2, \hat w_{2l,j}),  n_r<i\le n$.  Then $a_{2i}, n_r<i\le n$, 
form a regular sequence in $\mathcal R_\nu$.  The proof of the following theorem is analogous to that of Theorem \ref{gen-grassmann}.  

\begin{theorem} \label{gen-flagmanifolds} 
Suppose that $(S,\alpha)$ is a paracompact space with a fixed point free involution $\alpha$ and let $\nu=n_1, n_2, \ldots,n_r$ be a sequence 
of positive numbers.    With notations as above, we have an 
isomorphism $\mathcal R_\nu/\mathcal I_\nu\to H^*(P(S,F_\nu);\mathbb Z_2)$ of $H^*(Y;\mathbb Z_2)$-algebras defined by $\hat w_{2i,j}\mapsto w_{2i}(\hat \gamma_j)$. \hfill $\Box$
\end{theorem}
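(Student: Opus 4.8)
The plan is to mirror the proof of Theorem \ref{gen-grassmann} step for step, treating the collection $\gamma_1,\dots,\gamma_{r-1}$ as the analogue of the tautological bundle $\gamma_{n,k}$ and the last bundle $\gamma_r$ as the analogue of the complementary bundle $\beta_{n,k}$. First I would define the candidate $H^*(Y;\mathbb Z_2)$-algebra homomorphism $\Theta:\mathcal R_\nu/\mathcal I_\nu\to H^*(P(S,F_\nu);\mathbb Z_2)$ by $\hat w_{2i,j}\mapsto w_{2i}(\hat\gamma_j)$. That $\Theta$ is well defined is forced by the relations $a_{2s}=0$, $n_r<s\le n$: the bundle $\hat\gamma_r=P(S,\gamma_r)$ has real rank $2n_r$, so $w_{2s}(\hat\gamma_r)=0$ for $2s>2n_r$, and by Equation (13) the class $w_{2s}(\hat\gamma_r)$ is exactly the coefficient $a_{2s}$ of $(1+yt)^n\prod_{1\le j<r}w(\hat\gamma_j,t)^{-1}$; the odd Stiefel-Whitney classes $w_{2i+1}(\hat\gamma_j)$ are rewritten in terms of $y$ and the even ones via Equation (4), exactly as for Grassmannians. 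Surjectivity of $\Theta$ follows from Proposition \ref{leray-hirsch} once one notes, again via (13), that the classes $w_{2i}(\hat\gamma_j)$ with $j<r$ already generate $H^*(P(S,F_\nu);\mathbb Z_2)$ as an $H^*(Y;\mathbb Z_2)$-algebra.

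As in Theorem \ref{gen-grassmann}, it then suffices to prove that $\Theta$ is an isomorphism of $H^*(Y;\mathbb Z_2)$-modules, and the essential input is the flag-manifold analogue of Lemma \ref{regularsequence}: in the graded polynomial algebra $\mathbb Z_2[y,\hat w_{2i,j}:1\le i\le n_j,\,1\le j<r]$, the sequence $y$, $a_{2s}$ ($n_r<s\le n$) is regular and the quotient by the ideal it generates is isomorphic to $H^*(F_\nu;\mathbb Z_2)$. Reducing modulo $y$ turns the $a_{2s}$ into the degree-$2s$ parts of $\prod_{1\le j<r}w(\gamma_j,t)^{-1}$, and the statement becomes the classical Borel presentation of $H^*(F_\nu;\mathbb Z_2)$: after using the relation $\prod_{1\le j\le r}w(\gamma_j,t)=1$ to eliminate the Stiefel-Whitney classes of $\gamma_r$, one is left with exactly $n-n_r$ generators $\hat w_{2i,j}$ ($j<r$) and the $n-n_r$ relations $a_{2s}=0$. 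Since the number of generators matches the number of relations and the quotient is finite-dimensional, these relations automatically form a regular sequence (the relations $a_{2s}$ with $s>n$ lying in the ideal they generate); this can be checked as in \cite[\S23]{bott-tu}. Writing $A\subset H^*(Y;\mathbb Z_2)$ for the subalgebra generated by $y$, one then has $\mathcal R_\nu\cong H^*(Y;\mathbb Z_2)\otimes_A(A[\hat w_{2i,j}])$, whence $\mathcal R_\nu/\mathcal I_\nu\cong H^*(Y;\mathbb Z_2)\otimes_A\big(A[\hat w_{2i,j}]/\langle a_{2s}\rangle\big)$ is a free $H^*(Y;\mathbb Z_2)$-module of rank $\dim_{\mathbb Z_2}H^*(F_\nu;\mathbb Z_2)$.

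Granting the regular-sequence lemma, the remaining case analysis is a verbatim copy of the Grassmann argument. For $(S,\alpha)=(\mathbb S^m,-id)$ the flag analogue of Proposition \ref{sphere-gnk} gives $A[\hat w_{2i,j}]/\langle a_{2s}\rangle\cong H^*(P(\mathbb S^m,F_\nu);\mathbb Z_2)\cong A\otimes H^*(F_\nu;\mathbb Z_2)$, so that $\mathcal R_\nu/\mathcal I_\nu\cong H^*(Y;\mathbb Z_2)\otimes H^*(F_\nu;\mathbb Z_2)$; by Proposition \ref{leray-hirsch} this has the same finite $H^*(Y;\mathbb Z_2)$-module rank as the target, and a surjection of free modules of equal finite rank is an isomorphism. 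The passage to $(\mathbb S^\infty,-id)$ uses that $P(\mathbb S^m,F_\nu)\hookrightarrow P(\mathbb S^\infty,F_\nu)$ is an $(m-1)$-equivalence and hence induces an $H^*(Y;\mathbb Z_2)$-algebra isomorphism through degree $m-1$; letting $m\to\infty$ settles the case $\mathbb S^\infty$. Finally, for an arbitrary paracompact $(S,\alpha)$, the double cover $S\to Y$ is classified by a $\mathbb Z_2$-equivariant map $f:(S,\alpha)\to(\mathbb S^\infty,-id)$, so $P(S,F_\nu)$ is a pullback of $P(\mathbb S^\infty,F_\nu)$ and, by functoriality together with the bundle identity (1), each $\hat\gamma_j$ is pulled back from the universal case; naturality of the isomorphism then yields the result for general $S$.

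The main obstacle is the second paragraph: establishing the regular-sequence property and the identification of the quotient $\mathbb Z_2[y,\hat w_{2i,j}]/\langle y,a_{2s}\rangle$ with $H^*(F_\nu;\mathbb Z_2)$. Once this complete-intersection fact is secured, the construction of $\Theta$ and the bootstrapping from $\mathbb S^m$ to $\mathbb S^\infty$ to general $S$ are formal repetitions of the Grassmann case.
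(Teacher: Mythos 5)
Your proposal is correct and follows essentially the same route as the paper, which itself only sketches this theorem by declaring the proof ``analogous to that of Theorem \ref{gen-grassmann}'' after setting up Equation (13), the generators $\hat w_{2i,j}$ ($j<r$), and the regular sequence $a_{2s}$, $n_r<s\le n$. Your expansion---well-definedness from $w_{2s}(\hat\gamma_r)=0$ for $s>n_r$, the complete-intersection identification of $\mathbb Z_2[y,\hat w_{2i,j}]/\langle y,a_{2s}\rangle$ with the Borel presentation of $H^*(F_\nu;\mathbb Z_2)$, and the bootstrap $\mathbb S^m\to\mathbb S^\infty\to$ general $S$---is exactly the intended filling-in of that analogy.
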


We remark that, setting $k=n-n_r$,  the projection $\pi :F_\nu\to G_{n, k}$  defined as $\underline U\mapsto U_1+\cdots+U_{r-1}$ is $\mathbb Z_2$-equivariant and pulls back $\gamma_{n,k} $ (resp. $\beta_{n,k}$) to $\oplus_{1\le j<r} \gamma_j$ (resp.  $\gamma_r$).   
Hence $P(S,\pi)^*(\hat \gamma_{n,k})=\oplus_{1\le j<r} \hat \gamma_j$.
Moreover, 
using Proposition \ref{leray-hirsch} and the fact that $\pi^*:H^*(G_{n,k};\mathbb Z_2) \to H^*(F_\nu;\mathbb Z_2)$ is a monomorphism, 
we see that $P(S, \pi): P(S,F_\nu)\to P(S,G_{n,k})$ induces a monomorphism in $\mathbb Z_2$-cohomology.    

We conclude this section with the following remark.

\begin{remark}{\em Let $\sigma: S \to S$ be a fixed point free involution.  
Let $X\hookrightarrow F_\nu$ be a Schubert variety in a complex flag manifold $F_\nu$.     As had already been commented, $X$ is stable by complex conjugation $\sigma$ on $F_\nu$ and $X^\sigma=X\cap \mathbb RF_\nu$ is non-empty.   
Moreover, $X$ admits a cell-decomposition having cells only in even dimensions, where the cells are the Schubert cells of $F_\nu$ contained in $X$.   Hence the inclusion $X\hookrightarrow F_\nu$ induces 
a surjection $H^*(F_\nu;\mathbb Z)\to H^*(X;\mathbb Z)$.  It follows that the (mod $2$) cohomology of 
$X$ is generated by Chern classes (mod $2$) of complex vector bundles on $X$.  So Proposition \ref{leray-hirsch} is applicable to the $X$-bundle $P(S,X)\to Y$ and we have $H^*(P(S,X);\mathbb Z_2)\cong H^*(Y;\mathbb Z_2)\otimes H^*(X)$ as $H^*(Y;\mathbb Z_2)$-modules.   As for the $H^*(Y;\mathbb Z_2)$-algebra structure, it is determined by the surjection $H^*(P(S,F_\nu);\mathbb Z_2)\to H^*(P(S,X);\mathbb Z_2)$.   We omit the details.  }
\end{remark}

\subsection{Equivariant cohomology of $(X,\sigma)$} 
As an application of Theorems \ref{torus},  \ref{gen-flagmanifolds} we obtain the $\mathbb Z_2$-equivariant cohomology $H_{\mathbb Z_2}^*(X;\mathbb Z_2)$ of $(X,\sigma)$ when $X$ is either a torus manifold whose torus quotient is a homology polytope, or, is a complex flag manifold $F_\nu$.  At least in the case of complex flag manifolds, this result is perhaps 
known to experts but we could not find an explicit reference.     

When $S=\mathbb S^\infty$ with antipodal action, the space $P(\mathbb S^\infty,X)$ is identical to the Borel construction $\mathbb S^\infty\times_{\mathbb Z_2} X$ (since $\mathbb S^\infty$ is contractible).   
Therefore the equivariant cohomology algbera $H_{\mathbb Z_2}^*(X;\mathbb Z_2)$ equals $H^*(P(\mathbb S^\infty, X);\mathbb Z_2)$.   When $H^*(X;\mathbb Z_2)$ is generated by mod $2$ reduction of Chern classes of finitely many 
$\sigma$-conjugate vector bundles $(\omega_j,\sigma_j)$,  Proposition \ref{leray-hirsch} is applicable and we obtain that 
$H^*_{\mathbb Z_2}(X;\mathbb Z_2)$ is isomorphic to $\mathbb Z_2[y]\otimes 
H^*(X;\mathbb Z_2)$ as a $H^*(\mathbb RP^\infty;\mathbb Z_2)=\mathbb Z_2[y]$-module.  
The inclusion $\mathbb S^m\hookrightarrow \mathbb S^\infty $ induces an inclusion $P(\mathbb S^m,X)\hookrightarrow P(\mathbb S^\infty, X)$ which is an $(m-1)$-equivalence.  
 It follows that $H^i_{\mathbb Z_2}(X;\mathbb Z_2)\cong H^i(P(\mathbb S^\infty,X);\mathbb Z_2)\to H^i(P(\mathbb S^m,X) ;\mathbb Z_2)$ induced by inclusion is an  isomorphism  for all $i<m$. 
Therefore $H^*_{\mathbb Z_2}(X;\mathbb Z_2) $ is isomorphic to the inverse limit of graded $\mathbb{Z}_2$ algebras $\{H^*(P(\mathbb{S}^m,X);\mathbb{Z}_2)\}_{m\ge 2}$.    As an illustration, we obtain the following result 
as an immediate consequence of Theorems \ref{torus} and \ref{gen-flagmanifolds}.  

\begin{theorem}  \label{equivariant-cohomology}  We keep the above notations.   \\
(i) Let $X=X(Q,\Lambda)$ be a $T$-torus manifold where $Q=X/T$ is a homology polytope.  Then $H_{\mathbb Z_2}^*(X;\mathbb Z_2)$ is isomorphic to the $A$-algebra 
$R(Q,\Lambda)$ where $A=H^*(\mathbb RP^\infty;\mathbb Z_2)\cong \mathbb Z_2[y]$.  \\
(ii) Let $\nu=n_1\le \cdots<n_r, n=\sum n_j$.   
Then $H_ {\mathbb Z_2}^*(F_\nu;\mathbb Z_2)$ is 
isomorphic to $\mathcal R_\nu/\mathcal I_\nu $ where $\mathcal R_\nu=A[ \hat w_{2i,j};1\le i\le n_j,1\le j<r]$ where $A=H^*(\mathbb RP^\infty;\mathbb Z_2)=\mathbb Z_2[y]$ and $\mathcal I_\nu\subset \mathcal R_\nu$ 
is the ideal generated by the $a_{2i}\in \mathcal R_\nu, n_r<i\le n$.  \hfill $\Box$
\end{theorem}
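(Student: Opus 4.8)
The plan is to deduce both parts directly from Theorems \ref{torus} and \ref{gen-flagmanifolds} by specializing to the universal case $(S,\alpha)=(\mathbb S^\infty,-id)$. The crucial observation, already recorded in the discussion preceding the statement, is that $\mathbb S^\infty$ is contractible and carries a free $\mathbb Z_2$-action, so that $P(\mathbb S^\infty,X)=\mathbb S^\infty\times_{\mathbb Z_2}X$ \emph{is} the Borel construction for $(X,\sigma)$. Hence $H^*_{\mathbb Z_2}(X;\mathbb Z_2)$ is, by definition, isomorphic to $H^*(P(\mathbb S^\infty,X);\mathbb Z_2)$ as a graded algebra over $H^*(B\mathbb Z_2;\mathbb Z_2)=H^*(\mathbb RP^\infty;\mathbb Z_2)$. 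For this choice of $S$ one has $Y=\mathbb S^\infty/\mathbb Z_2=\mathbb RP^\infty$, whence $A:=H^*(Y;\mathbb Z_2)=\mathbb Z_2[y]$ with $y=w_1(\xi_\alpha)$, the polynomial algebra (with no truncation, since $\mathbb RP^\infty$ has nonzero $\mathbb Z_2$-cohomology in every degree). First I would note that $\mathbb S^\infty$, being a CW complex, is paracompact Hausdorff and the antipodal involution is fixed-point free, so the standing hypotheses on $(S,\alpha)$ required in \S4 are met.

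For part (i), I would simply invoke Theorem \ref{torus} with $S=\mathbb S^\infty$. Since $X=X(Q,\Lambda)$ is a torus manifold whose orbit space $Q$ is a homology polytope, that theorem yields an isomorphism of $A$-algebras $H^*(P(\mathbb S^\infty,X);\mathbb Z_2)\cong R(Q,\Lambda)=A[\tilde x_1,\ldots,\tilde x_m]/I(Q,\Lambda)$ sending $\tilde x_j\mapsto w_2(\hat\omega_j)$. Combining this with the Borel identification of the previous paragraph and substituting $A=\mathbb Z_2[y]$ gives precisely the claimed description of $H^*_{\mathbb Z_2}(X;\mathbb Z_2)$. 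No further computation is needed; the generators $\tilde x_j$ are exactly the equivariant Poincar\'e duals of the characteristic submanifolds under this identification.

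For part (ii), the argument is parallel: I would apply Theorem \ref{gen-flagmanifolds} with $(S,\alpha)=(\mathbb S^\infty,-id)$ to the complex flag manifold $F_\nu$, $\nu=n_1\le\cdots<n_r$. That theorem produces an isomorphism of $H^*(Y;\mathbb Z_2)$-algebras $\mathcal R_\nu/\mathcal I_\nu\xrightarrow{\ \sim\ }H^*(P(\mathbb S^\infty,F_\nu);\mathbb Z_2)$ carrying $\hat w_{2i,j}\mapsto w_{2i}(\hat\gamma_j)$, where now $\mathcal R_\nu=A[\hat w_{2i,j}\mid 1\le i\le n_j,\,1\le j<r]$ with $A=\mathbb Z_2[y]$ and $\mathcal I_\nu$ is generated by $a_{2i}$, $n_r<i\le n$. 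Via the Borel identification this is the asserted presentation of $H^*_{\mathbb Z_2}(F_\nu;\mathbb Z_2)$.

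The result is essentially formal once the earlier theorems are in hand, so there is no substantial obstacle of a computational nature. The only point meriting care—and the step I would state explicitly—is the verification that Theorems \ref{torus} and \ref{gen-flagmanifolds}, whose hypotheses on $(S,\alpha)$ permit an arbitrary paracompact space with a fixed-point-free involution, genuinely apply to the \emph{infinite-dimensional} model $S=\mathbb S^\infty$; this is exactly why those theorems were proved for general paracompact $S$ (via the classifying map $Y\to\mathbb RP^\infty$ and the $(m-1)$-equivalences $P(\mathbb S^m,X)\hookrightarrow P(\mathbb S^\infty,X)$) rather than only for spheres. Granting this, one could alternatively phrase the conclusion as the inverse limit of the $A_m$-algebras $H^*(P(\mathbb S^m,X);\mathbb Z_2)$ over $m$, which reproduces the same ring since the ideals $I$ and $\mathcal I_\nu$ are defined by relations independent of $m$ and only the coefficient ring $A=\mathbb Z_2[y]/\langle y^{m+1}\rangle$ changes with $m$, passing in the limit to the untruncated $\mathbb Z_2[y]$.
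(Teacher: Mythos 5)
Your proposal is correct and takes essentially the same route as the paper: the paper's proof is precisely the identification of $P(\mathbb S^\infty,X)$ with the Borel construction $\mathbb S^\infty\times_{\mathbb Z_2}X$ followed by the specialization of Theorems \ref{torus} and \ref{gen-flagmanifolds} to $(S,\alpha)=(\mathbb S^\infty,-id)$, together with the same inverse-limit description via the $(m-1)$-equivalences $P(\mathbb S^m,X)\hookrightarrow P(\mathbb S^\infty,X)$ that you mention as an alternative phrasing. Your explicit check that $\mathbb S^\infty$ satisfies the paracompactness and fixed-point-freeness hypotheses is a sound (if minor) addition that the paper leaves implicit.
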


{\bf Acknowledgements:}  The authors thank Goutam Mukherjee and Vikraman Uma for their comments on an 
earlier version of this paper.


\end{document}